\newtheorem{theorem}{Theorem}
\newtheorem{corollary}[theorem]{Corollary}
\theoremstyle{definition}
\theoremstyle{lemma}
\newtheorem{lemma}[theorem]{Lemma}
\theoremstyle{remark}
\newtheorem{remark}[theorem]{Remark}
\numberwithin{theorem}{section}
\numberwithin{equation}{section}
\numberwithin{table}{section}
\numberwithin{figure}{section}
\newcommand{\ddiv}{\operatorname{div}}
\newcommand{\tri}{\mathcal{T}}
\newcommand{\R}{\mathbb{R}}
\newcommand{\id}{\mathbb{1}}
\newcommand{\tr}{\mathrm{tr}}
\newcommand{\cor}{\mathcal{C}}
\newcommand{\Ker}{\mathrm{Ker}}
\newcommand{\opnorm}[1]{{\left\vert\kern-0.25ex\left\vert\kern-0.25ex\left\vert #1 
		\right\vert\kern-0.25ex\right\vert\kern-0.25ex\right\vert}}
\newcommand{\calG}{\mathcal{G}}
\newcommand\dx{\,\text{d}x}
\newcommand{\DtoN}{\mathfrak{L}}
\newcommand{\dist}{\texttt{\upshape{dist}}}
\newcommand{\MS}{\mathcal{M}}
\newcommand{\SM}{\mathcal{S}}
\newcommand{\AC}{\mathcal{A}}
\newcommand{\VHo}{V_H^0}
\newcommand{\VH}{V_H}
\newcommand{\Vh}{V_h}
\newcommand{\V}{V}
\newcommand{\Vo}{V^0}
\newcommand{\Ri}{R}
\newcommand{\RiH}{R_H}
\newcommand{\X}{X}
\newcommand{\XH}{X_H}
\newcommand{\Eb}{E^b}
\newcommand{\EbH}{E^b_H}
\definecolor{color2}{RGB}{255, 126, 126}
\definecolor{color3}{RGB}{0, 100, 0}
\definecolor{color4}{RGB}{30, 140,200}
\definecolor{color1}{RGB}{176, 226, 255}
\definecolor{myOrange}{RGB}{225,92,22} 
\tikzset{cross/.style={cross out, draw=black, minimum size=2*(#1-\pgflinewidth), inner sep=0pt, outer sep=0pt},
	cross/.default={0.6ex}}
\begin{document}
\title[Reconstruction of effective models from low-resolution data]{Reconstruction of quasi-local numerical effective models from low-resolution measurements}
\author[]{A.~Caiazzo$^{\dagger}$, R.~Maier$^*$, D.~Peterseim$^*$}
\address{${}^{\dagger}$ Weierstrass Institute for Applied Analysis and Stochastics (WIAS) Berlin,
	Mohrenstr. 39, 10117 Berlin, Germany}
\email{caiazzo@wias-berlin.de}
\address{${}^{*}$ Department of Mathematics, University of Augsburg, Universit\"atsstr. 14, 86159 Augsburg, Germany}
\email{\{roland.maier,daniel.peterseim\}@math.uni-augsburg.de}
\date{\today}
\keywords{}
%
%
\begin{abstract}
We consider the inverse problem of reconstructing an effective model for a~prototypical diffusion process in strongly heterogeneous media based on coarse measurements. 
The approach is motivated by quasi-local numerical effective forward models that are provably reliable beyond periodicity assumptions and scale separation. 
The goal of this work is to show that an identification of the matrix representation related to these effective models is possible. On the one hand, this provides a reasonable surrogate in cases where a direct reconstruction is unfeasible due to a mismatch between the coarse data scale and the microscopic quantities to be reconstructed. On the other hand, the approach allows us to investigate the requirement for a~certain non-locality in the context of numerical homogenization. 
Algorithmic aspects of the inversion procedure and its performance are illustrated in a series of numerical experiments.
\end{abstract}
%
%
\maketitle

\setcounter{tocdepth}{2}
{\tiny {\bf Keywords.} multiscale methods, numerical homogenization, computational inverse problems}\\
\indent
{\tiny {\bf AMS subject classifications.}  
{\bf 65N30}, 
{\bf 74Q05}, 
{\bf 65N21}} 

%
\section{Introduction}\label{sec:intro}

This paper focuses on the computational solution of multiscale inverse problems, i.e.,  
where the quantities to be sought are related to an unknown microscopic mathematical model, while measurement data are available only on a much coarser scale. 
Due to the scale mismatch between given data and unknowns, the direct recovery of microscopic quantities, e.g., in the form of a coefficient of a partial differential equation (PDE), is not only expensive but may also lead to unsatisfactory results. Therefore, the aim of this paper is to introduce a computational framework -- inspired by numerical homogenization methods -- to reconstruct an effective model, i.e., an alternative quantity on the coarse scale of the available data.

Effective models are the key to bridge the discrepancy between a microscopic coefficient and a coarse scale of interest in the forward setting. They provide models that compute reliable approximations of the solution of a PDE even in the presence of microscopic quantities. If structural assumptions such as (local) periodicity or scale separation hold, \emph{classical} homogenization methods (see, e.g.,  \cite{MR1455261,matache2002two,ee2003heterogenous,ee2005heterogeneous}) based on analytical homogenization theory can be used. These methods are \emph{local} in the sense that the communication among the degrees of freedom is only between neighbors. In a more general setting where these structural assumptions do not hold a~priori, \emph{numerical} homogenization methods (see, e.g., \cite{MalP14,HenP13,GGS12,BOZ13,EGH13,O17,Mai20}) provably provide an alternative. These methods are based on a coarse mesh with a characteristic mesh parameter and compute special problem-adapted basis functions with optimal approximation properties. Compared to the locality of classical homogenization methods, numerical homogenization methods typically involve a slight deviation from local communication between the degrees of freedom which, in turn, leads to somewhat increased sparsity patterns of the corresponding system matrices. Since this non-locality can be controlled, we refer to these methods as \emph{quasi-local}. 

This paper follows the pragmatic approach of reconstructing quasi-local effective models (i.e., their representation in term of quasi-local system matrices) that describe the effective behavior of a medium with microstructures based on coarse (i.e., low-resolution) measurement.  
On the one hand, this provides a surrogate to a direct reconstruction in the above-mentioned multiscale context. On the other hand, the reconstruction of an effective model allows us to investigate the requirement for a certain quasi-locality in the context of numerical homogenization approaches. 

The goal of this work is to promote this idea along with algorithmic aspects and preparatory numerical experiments. 
To demonstrate the feasibility and the potential of the approach, we investigate a stationary linear elliptic multiscale diffusion problem.
Moreover, we consider a worst-case scenario without any structural a priori knowledge on the underlying diffusion coefficient and do not assume that the heterogeneous coefficient can be parameterized by a few unknown parameters that could more easily be identified. 
The main novelty of our approach is that it aims to recover information about the microscopic scale in the sense of reproducing the effective behavior of corresponding solutions, instead of aiming at identifying the actual microscopic coefficient.
In the multiscale setting, in which measurement are given on a coarse scale and without a priori assumption on the structure of the microscopic coefficient, one cannot hope to reasonably identify the actual coefficient and our approach presents an alternative strategy. Our aim is not to compete with classical inverse strategies but to provide a first step towards reasonable surrogates in the case where a direct recovery of the coefficient is either too expensive or not reliable due to multiscale aspects of the problem. 

The remaining parts of the paper are organized as follows. 
We start with introducing the microscopic forward problem and motivating the reconstruction of an effective model, represented by an effective system matrix 
(Section~\ref{sec:forward}). 
This strategy is inspired by numerical homogenization strategies which provably provide reliable effective models for the given forward model. The rigorous motivation is presented in Section~\ref{sec:numhom} and is mainly to emphasize that an effective model indeed exists in the setting of very general coefficients. 
Based on the considerations for the forward problem, we then tackle the reconstruction of an effective quasi-local model from given measurements. To this end, we prescribe a quasi-local sparsity pattern of the system matrices and rephrase the inverse problem as a non-linear least squares problem for which we apply iterative minimization techniques such as the gradient descent or the Gau\ss-Newton method (Section~\ref{sec:inverse}).
In a series of numerical experiments (Section~\ref{sec:numerics}), we show that quasi-local effective models can indeed be reconstructed. 
In particular, we consider the cases where we are given measurements for all possible (coarse) boundary conditions, and also the setting where solutions are only known for a few boundary conditions. 
The aim of the experiments is to show that allowing the model to deviate from locality improves the inversion process and, thus, justifies the previous discussion. 

\section{Microscopic Forward Problem}\label{sec:forward}

In this section, we present the forward model and identify an effective discrete model, which is characterized by an appropriate system matrix. 

\subsection{Problem setting}

We consider the prototypical second-order linear elliptic diffusion problem
\begin{equation}\label{eq:PDEmodel}
\begin{aligned}
-\ddiv A \nabla u &= f &&\text{ in } \Omega,\\
 u &= u^0 &&\text{ on } \partial \Omega,
\end{aligned}
\end{equation}
where $\Omega \subset \R^d$, $d\in\{1,2,3\}$ is a polyhedral domain and the diffusion coefficient $A$ encodes the microstructure of the medium. 
We do not make any structural assumptions on the coefficient such as periodicity or scale separation. 
Admissible coefficients are elements of the following set,
\begin{equation*}
\AC := \left\{\begin{aligned} 
&A \in L^\infty(\Omega;\R_\mathrm{sym}^{d\times d})\,:\,\exists\, 0 < \alpha \leq \beta < \infty\,:\\&\forall \xi \in \R^d,\, \text{a.a. } x \in \Omega\,:\, \alpha |\xi|^2 \leq A(x)\xi \cdot \xi \leq \beta |\xi|^2\end{aligned}\right\},
\end{equation*}
which only requires minimal assumptions. 

Since solutions to problem \eqref{eq:PDEmodel} do not necessarily exist in the classical sense, we are interested in the weak solution of \eqref{eq:PDEmodel} in the Sobolev space $\V:=H^1(\Omega)$ which is characterized by the following variational formulation. 
Given $A\in\AC$, $u^0 \in\X:=H^{1/2}(\partial \Omega)$, and $f \in L^2(\Omega)$, we seek $u \in \V$ such that
\begin{equation}\label{eq:weakformH1}
\begin{aligned}
a(u,v) & = (f,v) &&\text{ for all } v \in \Vo:=H^1_0(\Omega),\\
\tr\, u & = u^0 &&\text{ on } \partial \Omega, 
\end{aligned}
\end{equation}
where $\tr\colon\V \to \X$ is the trace operator, $(w,v) := (w,v)_{L^2(\Omega)}$ denotes the $L^2$ inner product, and $a(w,v) := \int_{\Omega} A \nabla w \cdot \nabla v \dx$. 
Note that instead of \eqref{eq:PDEmodel}, we could as well consider a general second-order linear PDE in divergence form with additional lower-order terms. Such a generalization is straight-forward but is omitted for simplicity.  

\subsection{Coarse discretization}\label{ss:coarsedisc}
In practice, it is favorable to rewrite problem \eqref{eq:weakformH1} as a problem with homogeneous Dirichlet boundary conditions in $\Vo$.
Let $\Eb\colon\X\to\V$ be a linear extension operator, which also defines the restriction operator $\Ri\colon\V\to\Vo$ by $\Ri:=1-\Eb\,\tr$. 
Then, we can decompose $u = \Ri u + (1-\Ri)u = \Ri u + \Eb u^0$ and problem \eqref{eq:weakformH1} reduces to finding $\Ri u \in \Vo$ such that
\begin{equation}\label{eq:weakformH10}
a(\Ri u,v)  = (f,v) - a(\Eb u^0,v) 
\end{equation}
for all $v \in \Vo$.

Let us now introduce a \textit{coarse} target scale $H$ (e.g., the resolution of the data available for the inverse problem). 
We adopt the notation from numerical homogenization where a capital $H$ is used to indicate that the scale is indeed a coarse one.
In typical applications, $H$ will be much larger than the \textit{microscopic} scale, i.e., the scale on which the diffusion coefficient varies.

In order to discretize \eqref{eq:weakformH10}, let $\tri_H$ be a mesh of orthotopes with characteristic mesh size $H$ and denote with $Q^1(\tri_H)$
the corresponding space of piecewise bilinear functions. 
Further, we define the discrete spaces $\VH:=Q^1(\tri_H) \cap \V$, $\VHo := \VH \cap \Vo$, and $\XH := \tr\,\VH$ with dimensions $m = \mathrm{dim}\,\VH$, $m^0 = \mathrm{dim}\,\VHo$, and $n = \mathrm{dim}\,\XH$, respectively. The choice of these finite element spaces is not unique and other standard finite element spaces could be used. 

In the context of inverse problems, it is reasonable to consider that $u^0$ is defined as the first order finite element approximation of coarse experimental boundary data which approximate the real data up to order $H$ in the $H^{1/2}$ norm. That is, in the following we will assume that $u^0 \in \XH$. 
Further, we assume to have a discretized extension operator $\EbH\colon\XH\to\VH$ that fulfills $\Eb\vert_{\XH} = \EbH$ and a corresponding restriction operator $\RiH\colon\VH\to\VHo$ defined by $\RiH := 1 - \EbH\,\tr$.

Based on the above spaces, we introduce an injective linear operator 
\begin{equation}\label{eq:opG}
\calG\colon \VH \to \V \quad\text{ with }\quad \calG \VHo \subseteq \Vo,
\end{equation} 
which leads to the following discretization of \eqref{eq:weakformH10}: find $u_H \in \VH$, $u_H = \RiH u_H + \EbH u^0$, such that $\Ri u_H \in \VHo$ solves
\begin{equation}\label{eq:weakformH10disc}
a(\calG\RiH u_H,\calG v_H)  = (f,v_H) - a(\calG \Eb u^0,\calG v_H) 
\end{equation}
for all $v_H \in \VHo$. Further, we define the solution operator
\begin{equation}\label{eq:operL}
\begin{aligned}
\DtoN_A\colon \XH \times L^2(\Omega) & \to && V,\\
(u^{0},f) &\mapsto&& u, \text{ where } u \text{ solves } \eqref{eq:weakformH1}
\end{aligned} 
\end{equation}
and its discretized version 
\begin{equation}\label{eq:operLell}
\begin{aligned}
\DtoN_{A,\ell}^\calG\colon \XH \times L^2(\Omega) & \to && \VH,\\
(u^{0},f) &\mapsto&& u_H, \text{ where } u_H \text{ solves } \eqref{eq:weakformH10disc}.
\end{aligned} 
\end{equation}
The operator $\DtoN_A$ (and similarly also $\DtoN_{A,\ell}^\calG$) can be written as
\begin{equation}\label{eq:opDec}
\DtoN_A(u^0,f) = \DtoN_A(u^0,0) + \DtoN_A(0,f) 
\end{equation}
with the linear operators $\DtoN_A(\cdot,0)\colon\XH\to V$ and $\DtoN_A(0,\cdot)\colon L^2(\Omega)\to V$. 
For simplicity, we assume in the following that $f$ is a fixed function and only the boundary conditions may change. 
The generalization to the case where $f$ is variable as well is conceptually straightforward but slightly more involved. 
The decomposition \eqref{eq:opDec} motivates the distance function between operators defined by
\begin{equation}\label{eq:distf}
\dist_f(\mathfrak{A},\mathfrak{B}) := \left( \|\mathfrak{A}(\cdot,0)-\mathfrak{B}(\cdot,0)\|^2_{\mathcal{L}(\XH;L^2(\Omega))} + \|\mathfrak{A}(0,f)-\mathfrak{B}(0,f)\|^2_{L^2(\Omega)}\right)^{1/2}
\end{equation}
for all $\mathfrak{A,\,B}\colon \XH\times L^2(\Omega)\to V$. Note, however, that also other distance functions could be used.

\subsection{Characterization in terms of a stiffness matrix}\label{ss:stiffmatrix}
As a next step, we discuss an alternative representation of the operator $\DtoN_{A,\ell}^\calG$ using the stiffness matrix corresponding to the discrete formulation \eqref{eq:weakformH10disc}. 
Given a coefficient $A\in\AC$ and a mapping $\calG\colon\VH\to\V$ as above, the stiffness matrix $S_H=S_H(A,\calG)$ is defined by
\begin{equation}\label{eq:S_H} 
S_H[i,j] := a(\calG\Lambda_{z_j}, \calG\Lambda_{z_i}),\quad i,\, j \in \{1,\dots,m\},
\end{equation}
where $i \mapsto z_i$ is a fixed ordering of the $m$ nodes in $\tri_H$ and $\Lambda_z$ denotes the classical finite element hat function associated with the node $z \in \tri_H$.  

Therefore, we may define the operator
\begin{equation}\label{eq:opLSH}
\begin{aligned}
\DtoN_{S_H}\colon\XH \times L^2(\Omega) & \to && V_H,\\
(u^{0},f) &\mapsto&& u_H, \text{ where } u_H \text{ solves}\\
&&&\hspace{22pt}\left\{\begin{aligned} S_{H,0} \RiH u_H &= R_H M_H f_H - \RiH S_H \EbH u^0,\\
u_H&= u^{0} \text{ on } \partial \Omega,\end{aligned}\right.
\end{aligned}
\end{equation}
with the classical finite element mass matrix $M_H$, the restriction $S_{H,0} =\RiH S_H \RiH^T$ of $S_H$ to the inner nodes of $\tri_H$, and $f_H := \Pi_H f$ the $L^2$ projection of $f$ onto $\VH$. 
For better readability, we use the notation $v_H$ (or $B_H$) for both the vector $v_H \in \R^{m}$ (or the matrix $B_H \in \R^{m^0\times m}$) and the corresponding function $v_H \in \VH$ (or the mapping $B_H\colon\VH\to\VHo$). 
	
The following theorem is the basis of the inverse strategy discussed in the next section. It is a direct consequence of Lemma~\ref{l:LH=Lell} and Corollary~\ref{cor:errForward}, which are proven in Section~\ref{sec:numhom}. It states the existence of an appropriate coarse model, characterized by an operator $\calG$, that is able to capture the effective behavior of the original model \eqref{eq:operL}. 

\begin{theorem}[Existence of an effective model]\label{t:errEff}
There exists an operator $\calG$ as in \eqref{eq:opG} and a corresponding stiffness matrix $S_H(A,\calG)$ such that 
\begin{equation}
\dist_f\big(\DtoN_A,\DtoN_{S_H(A,\calG)}\big) \lesssim H.
\end{equation}
Moreover, there exists a choice of $\calG$ and $\ell \sim |\log H|$ such that $S_H(A,\calG) \in \MS(\ell,\tri_H)$ with
\begin{equation}\label{eq:defMatrices}
\MS(\ell,\tri_H) := \left\{
S_H \in \R^{m\times m}_\mathrm{sym}\,:\, \forall\, 1 \leq i\leq j \leq m\,: \,z_i \notin \mathrm{N}^\ell({z_j})  \Rightarrow  S_H[i,j] = 0
\right\}
\end{equation}
defined as the set of matrices that may have a non-zero entry at position $[i,j]$ only if the corresponding nodes $z_i$ and $z_j$ belong to the $\ell$-neighborhood of each other. The $\ell$-neighborhood $\mathrm{N}^\ell$ is defined by
\begin{equation}\label{eq:Nb}
\mathrm{N}^\ell(\omega) := \mathrm{N}(\mathrm{N}^{\ell-1}(\omega)),\,\ell \geq 1,\qquad \mathrm{N}^0(\omega) := \bigcup \bigl\{T \in \tri_H\,:\, \omega\cap\overline T\subseteq\overline T \bigl\}
\end{equation}
for $\omega \subseteq \Omega$. 

We call the operator $\DtoN_{S_H(A,\calG)}$ the \emph{effective model} and $S_H(A,\calG)$ the \emph{effective stiffness matrix}. 
\end{theorem}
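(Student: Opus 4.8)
The plan is to realize $\calG$ through the prolongation operator of numerical homogenization (in the spirit of Localized Orthogonal Decomposition) and then to assemble the two claims from the ingredients announced right after the statement: the identification of the matrix-based operator $\DtoN_{S_H}$ with the variational discrete operator $\DtoN_{A,\ell}^\calG$ (Lemma~\ref{l:LH=Lell}), and the forward error estimate (Corollary~\ref{cor:errForward}). Accordingly, the whole argument splits into (i) producing an accurate $\calG$ for the first inequality and (ii) producing a localized, hence sparse, $\calG$ for the second assertion.

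For the first inequality I would take $\calG$ to be the ideal prolongation, mapping each coarse hat function $\Lambda_z$ to $\Lambda_z$ minus the solution of a global corrector problem (the largest admissible patch). First I would use Lemma~\ref{l:LH=Lell} to replace $\DtoN_{S_H(A,\calG)}$ by $\DtoN_{A,\ell}^\calG$, so that the claim becomes a statement about the variational discretization \eqref{eq:weakformH10disc} only. The bound on $\dist_f$ then follows from Corollary~\ref{cor:errForward}, applied to the two contributions of the splitting \eqref{eq:opDec} that enter the definition \eqref{eq:distf}: the boundary response $\DtoN_A(\cdot,0)$ in the operator norm on $\mathcal{L}(\XH;L^2(\Omega))$ and the fixed source response $\DtoN_A(0,f)$ in $L^2(\Omega)$.

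For the second assertion I would exchange the global corrector for its localized version, supported on the patch $\mathrm{N}^\ell(z)$ around each node $z$. The sparsity is then a pure support argument: since $\calG\Lambda_{z_j}$ is supported in $\mathrm{N}^\ell(z_j)$, the entry $S_H[i,j]=a(\calG\Lambda_{z_j},\calG\Lambda_{z_i})$ in \eqref{eq:S_H} must vanish as soon as these patches are disjoint, which happens whenever $z_i$ lies outside a fixed multiple of the $\ell$-neighborhood of $z_j$; after adjusting the constant in front of $\ell$ this is exactly membership in $\MS(\ell,\tri_H)$ as in \eqref{eq:defMatrices}, and symmetry is built into \eqref{eq:S_H}. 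To retain the $O(H)$ accuracy simultaneously, I would invoke the exponential decay of the correctors: the localization error scales like $\ell^{d/2}\exp(-c\ell)$ relative to the ideal error, so picking $\ell \sim |\log H|$ with a large enough constant drives it below $H$.

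The hard part is precisely this exponential-decay estimate feeding the localization step; it is the genuine technical content and is exactly what Corollary~\ref{cor:errForward} is expected to supply. By contrast, the equivalence in Lemma~\ref{l:LH=Lell} and the support-based sparsity argument are essentially bookkeeping. The only points demanding care are tracking how the corrector patch radius translates into the neighborhood index of $\MS(\ell,\tri_H)$, and checking that fixing the source $f$ is compatible with the additive decomposition \eqref{eq:opDec}, so that the two terms of $\dist_f$ can indeed be estimated independently.
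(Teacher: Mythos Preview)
Your proposal is correct and follows essentially the same route as the paper: the paper states that Theorem~\ref{t:errEff} ``is a direct consequence of Lemma~\ref{l:LH=Lell} and Corollary~\ref{cor:errForward}'' and, in Section~\ref{sec:numhom}, constructs $\calG = 1-\cor_\ell$ via LOD, proves the forward error (Theorem~\ref{t:errForward} and its Corollary~\ref{cor:errForward}) using the exponential decay \eqref{e:diffcor1}, and identifies $\DtoN_{S_H^\ell(A)}$ with $\DtoN_{A,\ell}^{\mathrm{eff}}$ (Lemma~\ref{l:LH=Lell}). The only cosmetic difference is that you separate the ideal (global) corrector for the first assertion from the localized one for the second, whereas the paper uses the single localized choice $\calG=1-\cor_\ell$ with $\ell\sim|\log H|$ for both; since the ideal case is just the limit $\ell\to\infty$, this changes nothing.
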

Theorem~\ref{t:errEff} justifies the inverse procedure that is presented in the following section. In particular, we may interpret coarse measurements of a solution operator $\DtoN_A$ as approximations obtained by an effective model due to the fact that there exists an appropriate effective model which is reasonably close.
\begin{figure*}
	\centering
	\captionsetup[subfigure]{labelformat=empty}
	\begin{subfigure}[b]{0.3\textwidth}
		\centering
		\includegraphics[width=\textwidth]{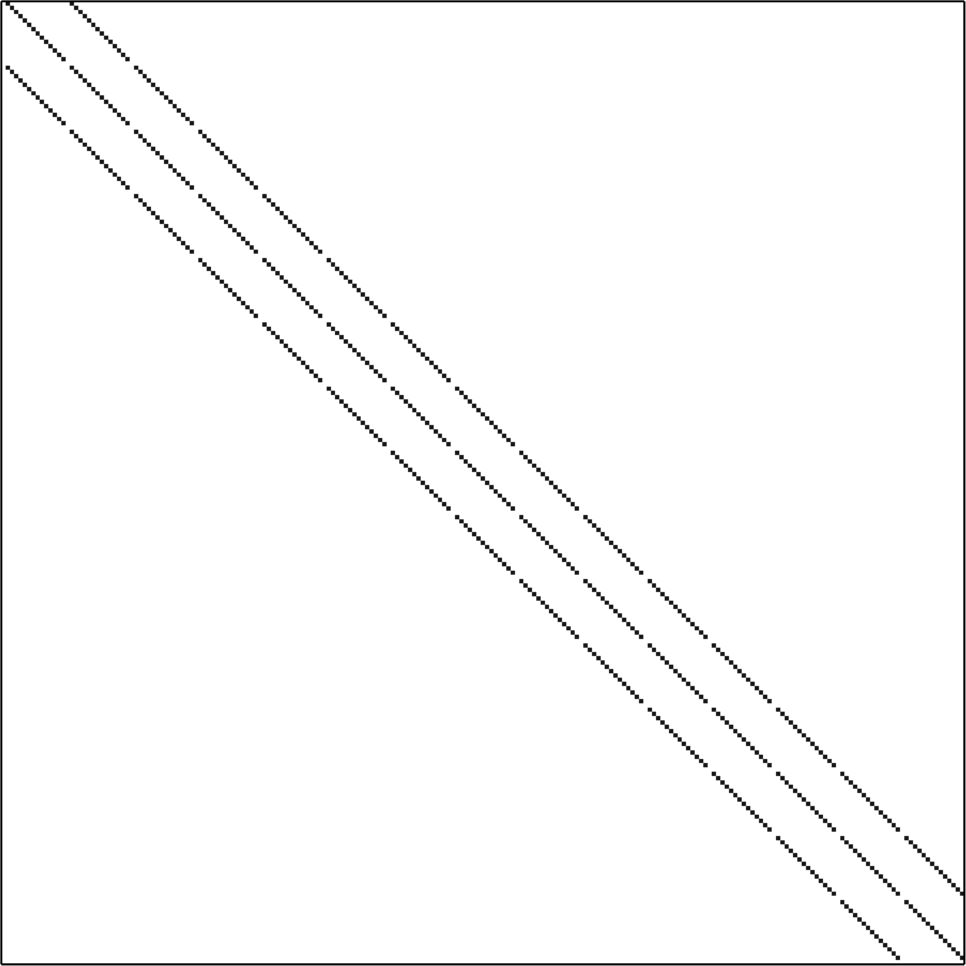}
		\caption
		{\small $\ell = 0$}
		\label{fig:pattern0}    
	\end{subfigure}
	\qquad
	\begin{subfigure}[b]{0.3\textwidth}  
		\centering 
		\includegraphics[width=\textwidth]{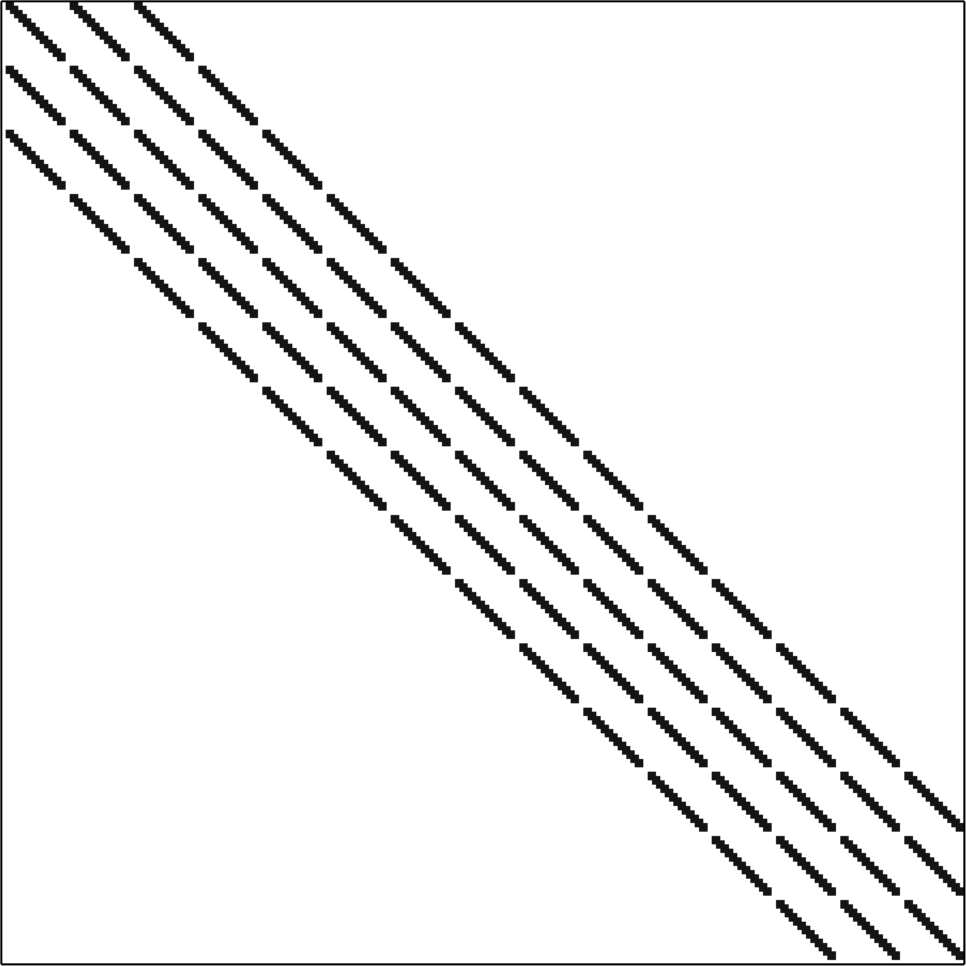}
		\caption[]%
		{\small $\ell=1$}   
		\label{fig:pattern1}
	\end{subfigure}
	\vskip\baselineskip
	\begin{subfigure}[b]{0.3\textwidth}   
		\centering 
		\includegraphics[width=\textwidth]{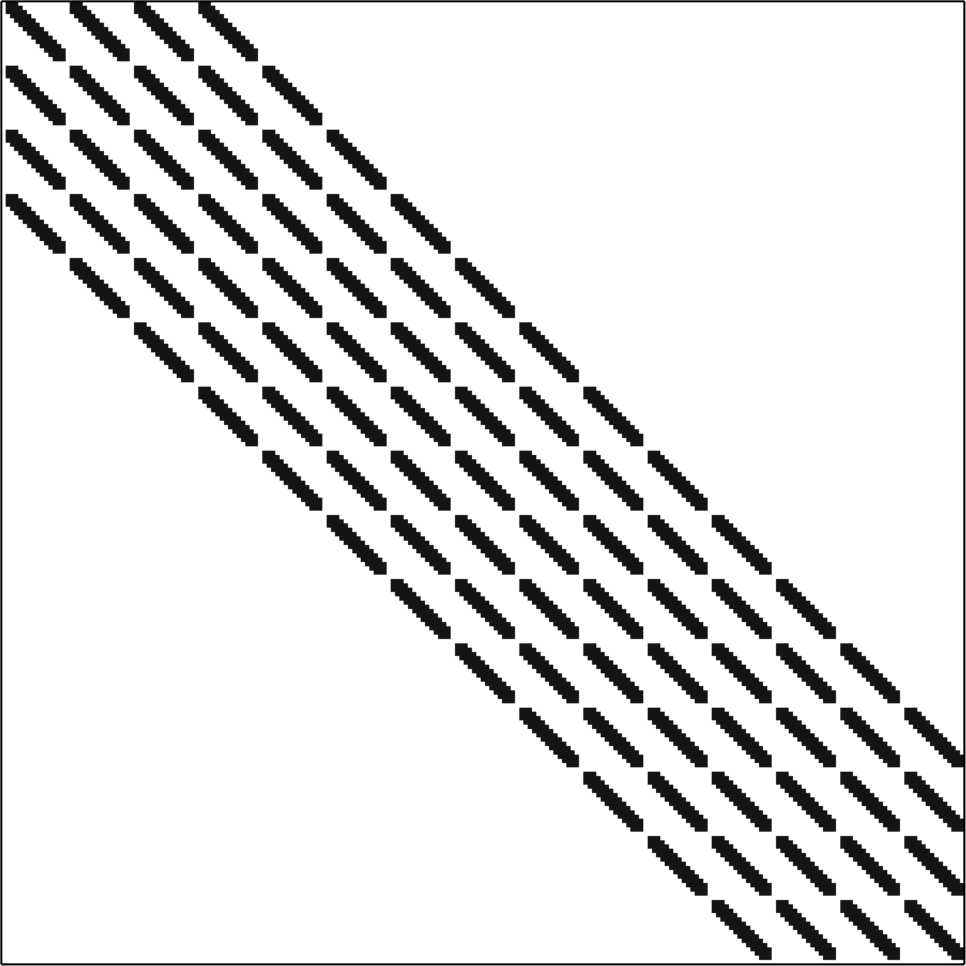}
		\caption[]%
		{\small $\ell=2$}    
		\label{fig:pattern2}
	\end{subfigure}
	\qquad
	\begin{subfigure}[b]{0.3\textwidth}   
		\centering 
		\includegraphics[width=\textwidth]{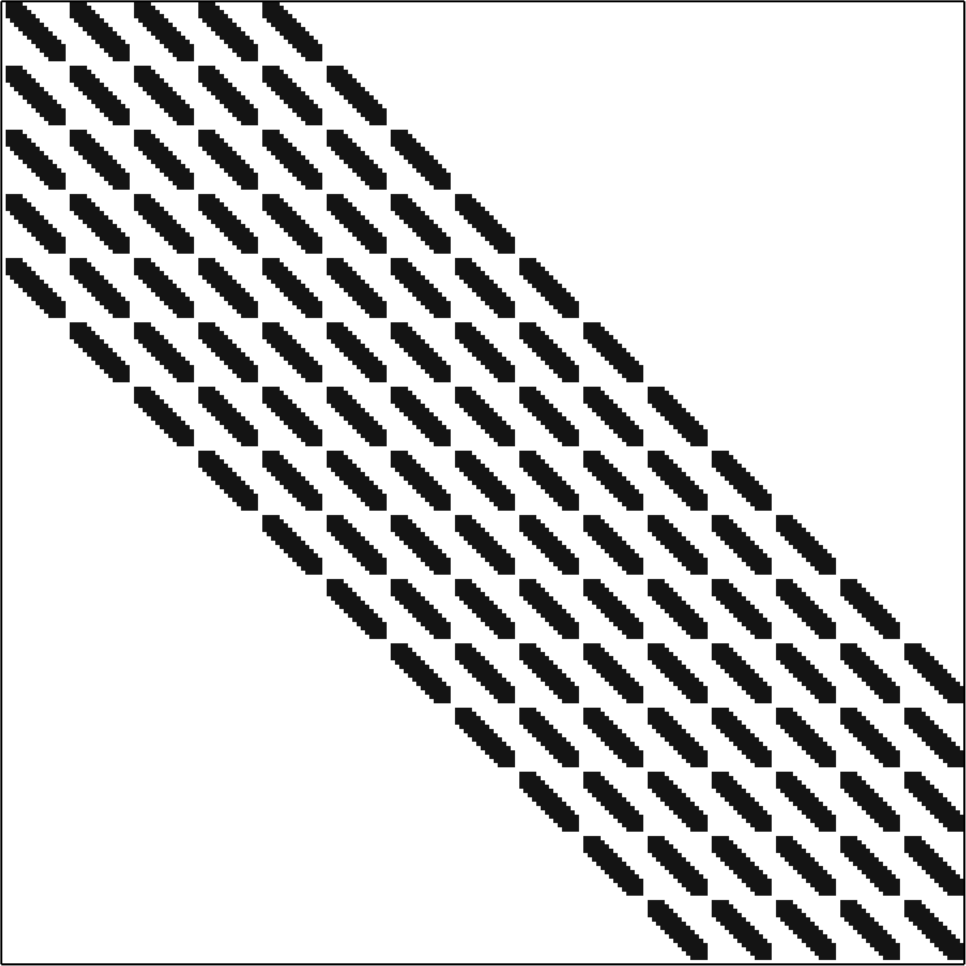}
		\caption[]%
		{\small $\ell=3$}    
		\label{fig:pattern3}
	\end{subfigure}
	\caption[]
	{\small Sparsity patterns of matrices in $\MS(\ell,\tri_H)$ for different values of $\ell$ on a Cartesian grid ($H = 2^{-4}$) with lexicographic ordering in two dimensions.} 
	\label{fig:sparsity}
\end{figure*}

\subsection{Quasi-locality and connection to numerical homogenization}
The operator $\calG$ defined in \eqref{eq:opG} and the corresponding effective stiffness matrix defined in \eqref{eq:S_H} are strongly related to \emph{numerical homogenization methods}. In contrast to \emph{analytical homogenization}, these approaches provably work beyond structural assumptions such as (local) periodicity or a clear separation of scales which cannot be guaranteed for general microstructures. 

One such method is the Localized Orthogonal Decomposition (LOD) approach which provides effective models that provably cope with arbitrary rough coefficients in a large class of model problems including diffusion problems \cite{MalP14,HenP13,HMP12}, elasticity \cite{HenPer16,ACMPP18} and wave propagation \cite{GalPet15,AH17,P17,Ver17,GHV17,MP18}, without requiring periodicity or scale separation. This method allows us to explicitly characterize an operator $\calG$ to prove Theorem~\ref{t:errEff} in Section~\ref{sec:numhom}.
For linear elliptic problems, there are various other numerical homogenization approaches such as the Generalized Finite Element Methods (GFEM) \cite{BL11}, AL bases \cite{GGS12}, Rough Polyharmonic Splines (RPS) \cite{BOZ13}, the Generalized Multiscale Finite Element Method (GMsFEM) \cite{EGH13}, Gamblets \cite{O17}, CEM-GMsFEM \cite{CEL18}, the higher-order multiscale approach described in \cite[Ch.~3]{Mai20}, and their variants with similar properties as LOD. 
All these methods compute special problem-adapted basis functions with optimal approximation properties based on underlying Galerkin methods.
To achieve optimal accuracy, a moderate price in terms of the computational complexity has to be paid compared to a standard finite element method (fixed order) on the same mesh in order to account for microscopic information. 
The computational overhead is either characterized by an increase in the number of degrees of freedom per mesh entity (GFEM, GMsFEM), e.g., elements or nodes, or in enlarging the support of the discrete basis functions (LOD, RPS, Gamblets, AL bases).  
In both cases, the result is a slightly denser sparsity pattern of the corresponding system matrices which is due an increased communication between the degrees of freedom. 
This quasi-locality distinguishes the above numerical homogenization methods from classical numerical multiscale methods based on homogenization theory such as the Multiscale Finite Element Method (MsFEM) \cite{MR1455261}, the Two-Scale Finite Element Method \cite{matache2002two}, or the Heterogeneous Multiscale Method (HMM) \cite{ee2003heterogenous,ee2005heterogeneous} that share the communication pattern of standard finite element methods. 

Quasi-locality also showed to be viable in connection with the pollution effect in high-frequency time-harmonic wave propagation \cite{BS97} which cannot be avoided unless the mesh size is coupled to, e.g., the polynomial degree \cite{MS10,MS11,MPS13} or the support of the basis functions \cite{P17} in a logarithmic way. 
Promising results using non-local models have also been achieved in the field of peridynamics \cite{Sil00,Lip14,Du16} or in isogeometric analysis \cite{IGA05,IGA09}. 
Moreover, quasi-locality also includes the increased communication of higher-order finite element approaches. 

\section{Inverse Problem: Reconstruction of an Effective Model}\label{sec:inverse}

Based on the considerations in the forward setting, we are now able to formulate the inverse problem which reconstructs an effective model characterized by a stiffness matrix with a certain sparsity behavior.

\subsection{Problem setting}\label{ssec:setting}

Let us assume that the diffusion coefficient $A$ is unknown and that structural assumptions
such as periodicity, quasi-periodicity, and given parameterization by few degrees of freedom are not satisfied a priori.
In an ideal setting, information about solutions to problem \eqref{eq:weakformH1} in the form of a solution operator
\begin{equation}\label{eq:solop}
\tilde\DtoN:=\DtoN_A(\cdot,f)\colon \X \to \V
\end{equation}
would be given. In practical applications, however, boundary data and information about the corresponding solutions are only available on some (coarse) scale, possibly much larger than the (micro) scale on which the diffusion coefficient and the corresponding solutions vary. 
A classical formulation of the inverse problem, for a fixed right-hand side $f$, consists in recovering $A$ in \eqref{eq:weakformH1} given the mapping
\begin{equation}\label{eq:solopeff}
\tilde\DtoN^\mathrm{eff}:=\DtoN^\mathrm{eff}_A(\cdot,f)\colon \XH \to \VH
\end{equation}
which comprises coarse measurements of solutions to \eqref{eq:weakformH1}.

However, since the unknown coefficient $A$ includes microscopic features, a direct approach of recovering $A$ by simulations on the microscopic scale is computationally unfeasible and does not provide reasonable reconstructions due to the mismatch between coarse data and a possibly microscopic coefficient. 
In this section, we present an alternative approach to recover information about the (macroscopic) effective model
taking into account the presence of a micro-scale diffusion coefficient.
Rather than reconstructing the diffusion coefficient itself, we tackle the reconstruction of an effective stiffness matrix that is able to reproduce the given data related to solutions to \eqref{eq:weakformH1}.
Therefore, the alternative formulation of the inverse problem reads:
\begin{equation}\label{eq:inv} 
\text{given }\tilde\DtoN^\mathrm{eff}\colon \XH \to \VH,\text{ find the corresponding stiffness matrix }\tilde S_H.
\end{equation} 
This alternative inverse problem is built upon the assumption that coarse given measurements on the scale $H$ may be represented by an effective mapping (up to order $H$) based on Theorem~\ref{t:errEff}.

The effective model corresponding to the reconstructed matrix provides a reliable surrogate that contains effective properties and that may be used for simulations on the coarse data scale. 
Moreover, knowledge on numerical homogenization methods can, in a second step, provide further information about, e.g., geometric features.

\subsection{Medical background}\label{ssec:medical}
The considered inverse problems are of internal type, i.e., we assume that measurement are available on the bulk domain.
This setting is motivated by the use of medical imaging protocols based on Magnetic Resonance Imaging (MRI), which
play a key role in modern diagnostics.
Using strong magnetic fields, magnetic field gradients, and radio waves, MRI allows clinicians to obtain, in vivo and non-invasively (and without exposing the body to radiation), both geometrical features of the body, e.g., shapes of the organs,
and functional properties, e.g., motion or diffusion processes.
As an example, Magnetic Resonance Elastography (MRE) is an imaging technique which is sensitive to mechanical parameters of the tissue.
During an MRE examination, the tissue undergoes an external mechanical excitation, imposed at given frequencies 
by so-called \textit{actuators}, attached to the surface of human tissues. 
In parallel, using phase-contrast MRI, i.e., postprocessing
the phase of the MRI signal, it is then possible to measure the \textit{internal} displacement of the tissue and hence to 
recover how shear and compression waves propagate into the body \cite{muthupillai-96, sack-08, sack-book-17}.
In practice, MRE allows to obtain average displacement fields on each element of a three-dimensional Cartesian mesh (a \textit{voxel}), whose 
resolution is typically of the order of millimeters, and it is practically limited by the examination time and by the properties of the MRI scanner.

Combining these data with a mechanical tissue model, it is then
possible to recover information about the elastic behavior of the tissue (a so-called \textit{elastogram}).
Clinical application of MRE are based on reconstructing tissue properties only on an \textit{effective} scale, i.e., describing the living tissue as a (visco-)elastic material with mechanical parameters varying only on the coarse data scale. The procedure is currently used to diagnose and monitor tissue diseases such as cancer and fibrosis, that
are characterized by different tissue stiffnesses. 
However, 
parameters describing the microscopic scales -- such as tissue porosity and vascular structures -- might play an important role in several applications, especially
in the context of poroelastic tissues \cite{hirsch-etal-2013-compmre,hirsch-etal-2014-liver}.
In those cases, in order to characterize the microstructural properties from coarse data, 
mathematical models defined on the effective scale -- such as the one proposed in Section \ref{ssec:setting} -- might provide a valuable alternative to efficiently take smaller scales into account.

\subsection{The minimization problem}

We formulate the inverse problem \eqref{eq:inv} as a minimization problem in the set $\MS(\ell,\tri_H)$, i.e., 
\begin{equation}\label{eq:invprob}
\text{find } \tilde S_H^* = \arg\min_{S_H \in \MS(\ell,\tri_H)} \mathcal{\tilde J}_H(S_H),
\end{equation}
where
\begin{equation}\label{functJ}
\mathcal{\tilde J}_H(S_H) = \frac{1}{2}\Big(\dist_f(\tilde\DtoN^\mathrm{eff},\DtoN_{S_H})\Big)^2.
\end{equation}
Based on Theorem~\ref{t:errEff}, we interpret the operator $\DtoN_{S_H}(\cdot,f)\colon \XH \to \VH$ as an effective model which can be represented by a matrix of size $m \times n$, i.e.,
\begin{equation*}\label{opMatrix}
\DtoN_{S_H}^\mathrm{eff} = \DtoN_{S_H}(\cdot,f) = \left(\id - \RiH^T S_{H,0}^{-1} \RiH S_H\right) \EbH + \RiH^T S_{H,0}^{-1}R_H M_H F_H,
\end{equation*}
with $F_H := [f_H,f_H,\dots,f_H] \in \R^{m \times n}$ and the identity matrix $\id \in \R^{m \times m}$. The matrix $\DtoN_{S_H}^\mathrm{eff}$ comprises full information about the forward problem in the sense that it includes the solutions of \eqref{eq:opLSH} for a complete set of basis functions of $\XH$.
The operator $\tilde\DtoN^\mathrm{eff}$ may also be interpreted as a matrix, so that the distance between the operators can be measured in general matrix norms. 
This is especially useful since a splitting of the form \eqref{eq:opDec} is generally not known for $\tilde\DtoN^\mathrm{eff}$.

Let $\mu := \mathrm{dim}\,\MS(\ell,\tri_H)$. 
Instead of \eqref{eq:invprob}, based on the matrix representation introduced above we consider a minimization problem for the functional $\mathcal{J}_H\colon \R^{\mu}\to \R$ defined by
\begin{equation}\label{functJmatrix}
\mathcal{J}_H(S_H) = \frac{1}{2}\big\| \tilde \DtoN^\mathrm{eff}\big\|_{\R^{{m} \times {n}}}^{-2}\big\| \tilde \DtoN^\mathrm{eff} - \DtoN_{S_H}^\mathrm{eff}\big\|^2_{\R^{{m} \times {n}}}. 
\end{equation}
At this stage, the choice of the norm in $\R^{{m} \times {n}}$ in \eqref{functJmatrix} is arbitrary. 
The results that we will show in Section~\ref{sec:numerics} have been obtained using 
the Frobenius norm, which seems to be a~natural candidate.

\subsection{Iterative minimization}

In order to find a minimizer of \eqref{functJmatrix}, we can now apply standard minimization techniques such as the \emph{Newton method} or the \emph{gradient descent method}. 
Here, we adopt a \emph{Gau\ss-Newton method} which, in our numerical computations, showed faster convergence in terms of number of iterations. 

In order to compute the descent direction, the most important step concerns the computation of the gradient of $\mathcal{J}_H$ with respect to the \emph{relevant entries} $\{s_i\}_{i=1}^\mu$ of $S_H$ (i.e., the diagonal and the non-zero entries above the diagonal, due to symmetry). 
Using the chain rule, we obtain
\begin{equation}\label{gradFunctJ}
\frac{\partial}{\partial s_i} \mathcal{J}_H(S_H) = -\big\| \tilde \DtoN^\mathrm{eff}\big\|_{\R^{{m} \times {n}}}^{-2}\Big(\tilde\DtoN^\mathrm{eff} - \DtoN_{S_H}^\mathrm{eff}\Big) : \frac{\partial \DtoN_{S_H}^\mathrm{eff}}{\partial s_i}. 
\end{equation}
For the Gau\ss-Newton method, only the derivatives of $\DtoN^\mathrm{eff}_{S_H}$ are needed, i.e.,
\begin{equation*}
\begin{aligned}
\frac{\partial \DtoN^\mathrm{eff}_{S_H}}{\partial s_i} &= - \RiH^T \left(\frac{\partial S_{H,0}^{-1}}{\partial s_i}\right) \RiH (S_H \EbH - M_H F_H)
- \RiH^T S_{H,0}^{-1} \RiH \left(\frac{\partial S_H}{\partial s_i}\right) \EbH\\
&= \RiH^T S_{H,0}^{-1}\left(\frac{\partial S_{H,0}}{\partial s_i}\right)S_{H,0}^{-1} \RiH (S_H \EbH - M_H F_H)
- \RiH^T S_{H,0}^{-1} \RiH \left(\frac{\partial S_H}{\partial s_i}\right) \EbH.
\end{aligned}
\end{equation*} 
Here, the double dot product is defined by $M:\tilde M=\mathrm{trace}({M\tilde M^T})$. 
The derivatives $\frac{\partial S_H}{\partial s_i}$ and $\frac{\partial S_{H,0}}{\partial s_i}$ are relatively easy to compute, as they are defined as global matrices that only contain at most two entries equal to $1$. 

For ease of notation, let us interpret $\DtoN_{S_H}^\mathrm{eff}$ and $S_H$ as vectors in $\R^{mn}$ and $\R^{m^2}$, respectively.
The Gau\ss-Newton method to minimize the functional $\mathcal{J}_H$ is then defined by the following steps.
\begin{itemize}
\item Let an initial matrix $S_H^0 \in \MS(\ell,\tri_H)$ be given.
\item For $k=0,1,\hdots$ (until a certain stopping criterion is satisfied), solve
\begin{equation}\label{GNreg1}
H_k p_k= \left[D\DtoN_{S_H^k}^\mathrm{eff}\right]^T \left[\tilde\DtoN^\mathrm{eff}-\DtoN_{S_H^k}^\mathrm{eff}\right] 
\end{equation}
where $D$ denotes the derivative with respect to the relevant entries of $S_H$ and
\begin{equation*}
H_k = \left[D\DtoN_{S_H^k}^\mathrm{eff}\right]^T\left[D\DtoN_{S_H^k}^\mathrm{eff}\right].
\end{equation*} 
\item Set $P_k\in\MS(\ell,\tri_H)$ as the matrix whose relevant entries are given by $p_k$ and define
\begin{equation}
S_H^{k+1} = S_H^{k} + \delta_k P_k
\end{equation}
with appropriately chosen \emph{step size} $\delta_k$, for example using \emph{backtracking line search} based on the \emph{Armijo-Goldstein condition}. 
\end{itemize}
Due to the ill-posedness of the inverse problem, the matrix $H_k$ might be singular. 
A possible approach to overcome this issue consists in replacing \eqref{GNreg1} by 
\begin{equation}\label{GNreg2}
\left(H_k + \eta\id\right) p_k= \left[D\DtoN_{S_H^k}^\mathrm{eff}\right]^T \left[\tilde\DtoN^\mathrm{eff}-\DtoN_{S_H^k}^\mathrm{eff}\right]
\end{equation}
with a given parameter $\eta > 0$.  

Another possible strategy consists in adding a regularization term to the functional to be minimized, i.e., in replacing \eqref{functJmatrix} by
\begin{equation}\label{J+Sreg}
\mathcal{J}_H(S_H) = \frac{1}{2}\big\| \tilde \DtoN^\mathrm{eff}\big\|_{\R^{{m} \times {n}}}^{-2}\big\| \tilde \DtoN^\mathrm{eff} - \DtoN_{S_H}^\mathrm{eff}\big\|^2_{\R^{{m} \times {n}}}
+\frac{\gamma}{2} 
\big\| S_\mathrm{reg} - S_H\big\|^2_{R^{m \times m}}
\end{equation}
where $\gamma > 0$ is a given regularization parameter and $S_\mathrm{reg}$ is a regularization (or stabilization) matrix.
Additionally, the computations of the gradient in \eqref{gradFunctJ} need to be adapted accordingly.
In the presence of multiple minimizers, this regularization enforces the solution to be close (depending on the parameter $\gamma$) to the matrix $S_\mathrm{reg}$. 
For example, if the aim of the inverse problem is to find defects in an otherwise homogeneous medium, a suitable choice for $S_\mathrm{reg}$ could be a standard finite element stiffness matrix for a constant diffusion coefficient.
In our practical computations, the regularization approach described in \eqref{GNreg2} is better suited since an appropriate regularization matrix $S_\mathrm{reg}$ is generally not known. 

We emphasize that the presented inversion process does not need to resolve any microscopic scales in order to obtain an effective numerical model. 
The information extracted by this procedure (i.e., the stiffness matrix $\tilde S_H$) may be used to simulate other problems subject to the same (unknown) diffusion coefficient. 
Furthermore, the information gathered can be seen as an intermediate step towards recovering information concerning the original coefficient itself. 
This additional recovery step will be studied in a future work.

\section{Justification of the Proposed Inverse Problem}\label{sec:numhom}

This section is devoted to proving Theorem~\ref{t:errEff} in Section~\ref{sec:forward}. In particular, we explicitly construct an operator $\calG$ as introduced in \eqref{eq:opG}. We emphasize, however, that the explicit construction is only to justify the reconstruction of an effective model. Apart from its communication behavior, none of the specific properties used below are required for the inverse procedure.

\subsection{Effective forward approximation beyond structural assumptions}

In this subsection, we use the multiscale technique known as Localized Orthogonal Decomposition (LOD) \cite{MalP14,HenP13} to obtain a coarse forward model on the scale $H$ which produces a~forward operator that can be used as a replacement for $\DtoN_A$ as given in \eqref{eq:operL}. 

To this end, we discretize \eqref{eq:weakformH10} in a suitable coarse multiscale space.  
Since the standard space $\VH$ is not suitable for the approximation of $u$ if $H$ is larger than the spatial scale of the microstructure, we enrich the coarse model with microscopic information about the problem via corrections of classical finite element functions. 
The construction of these corrections is based on a projective quasi-interpolation operator $I_H\colon \Vo\to \VHo$ with standard approximation and stability properties, i.e., for an element $T \in \tri_H$ with diameter $H_T$, it holds that
\begin{align}\label{eq:IH}
\|H_T^{-1}(v-I_H v)\|_{L^2(T)} + \|\nabla I_H v\|_{L^2(T)} \leq C \|\nabla v\|_{L^2(\mathrm{N}(T))}
\end{align}
for all $v\in \Vo$, where the constant $C$ is independent of $H$, and $\mathrm{N}(T) := \mathrm{N}^1(T)$ is the neighborhood of $T$ as defined in \eqref{eq:Nb}.
Note that for shape-regular meshes the above estimate also holds globally. 
For a particular choice of $I_H$, see \cite{ern2017finite,GP17,KorPY18}.

Based on $I_H$, we define, for any element $T \in \tri_H$ and any function $v_H\in\VHo$, the element correction $\cor_T v_H \in W := \Ker I_H$ by
\begin{equation}\label{eq:cor}
a(\cor_T v_H, w) = \int_T A \nabla v_H \cdot \nabla w =: a_T(v_H, w)
\end{equation}
for all $w \in W$, and the full correction $\cor\colon \VHo \to W$ by 
\begin{equation*}
\cor := \sum_{T \in \tri_H}\cor_T.
\end{equation*}
By construction, it holds that
\begin{equation}\label{eq:ortho}
a((1-\cor)v_H,w) = 0
\end{equation} 
for all $v_H \in \VHo$ and $w\in W$. 
The corrections $\cor_T v_H$ have, in general, global support. 
However, as shown in \cite{HenP13,Pet16} (based on \cite{MalP14}) they decay exponentially fast (see also the one-dimensional sketch in Figure~\ref{fig:1dLODfct}).
\begin{figure*}
	\centering
	\captionsetup[subfigure]{labelformat=empty}
	\begin{subfigure}[b]{0.4\textwidth}
		\centering
		\includegraphics[width=\textwidth]{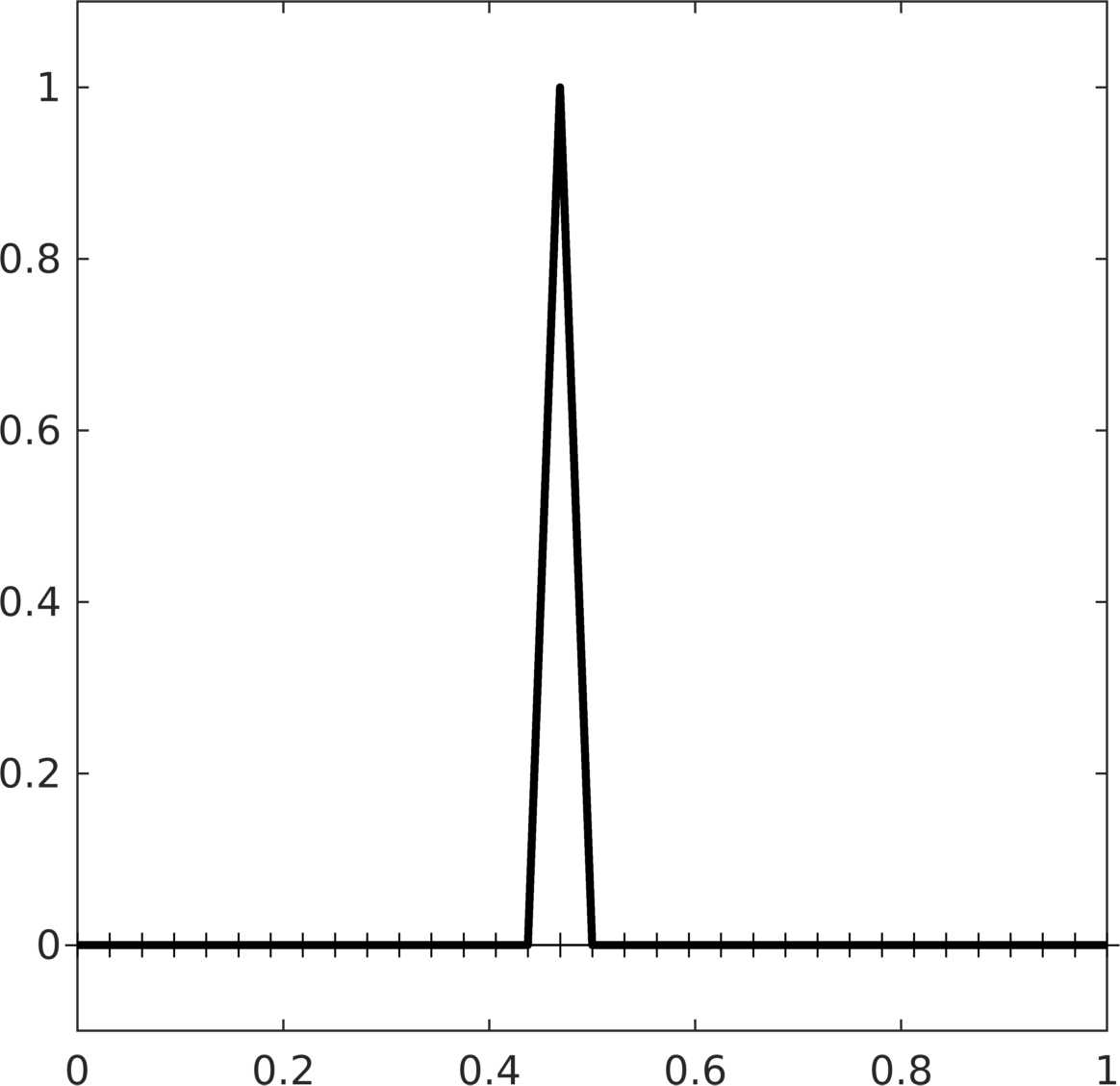}
		\caption
		{\small Hat function $\Lambda$}
		\label{fig:hat}    
	\end{subfigure}
	\quad
	\begin{subfigure}[b]{0.4\textwidth}  
		\centering 
		\includegraphics[width=\textwidth]{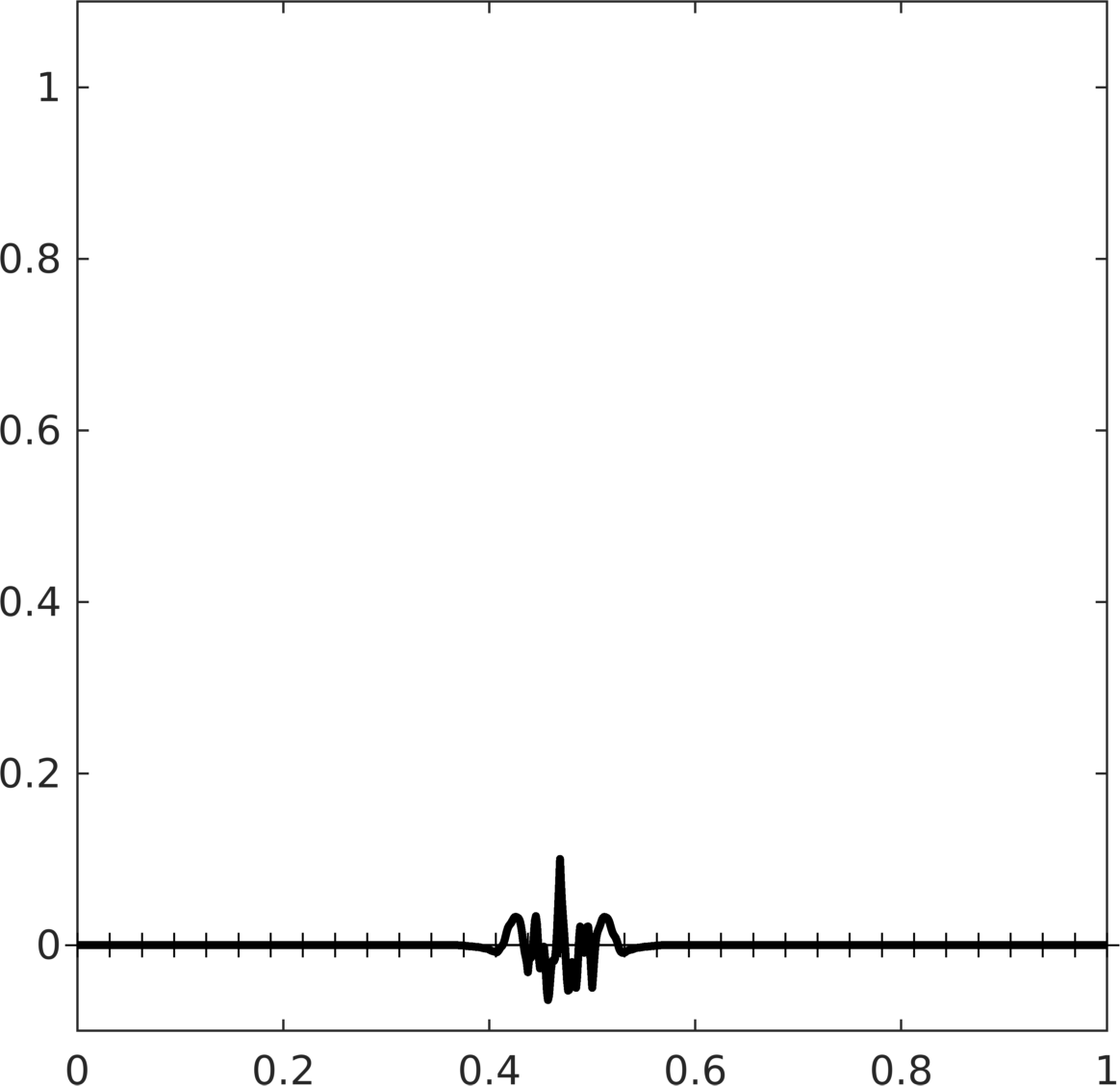}
		\caption[]%
		{\small Correction of hat function $\cor\Lambda$}   
		\label{fig:cor}
	\end{subfigure}
	\vskip\baselineskip
	\begin{subfigure}[b]{0.4\textwidth}   
		\centering 
		\includegraphics[width=\textwidth]{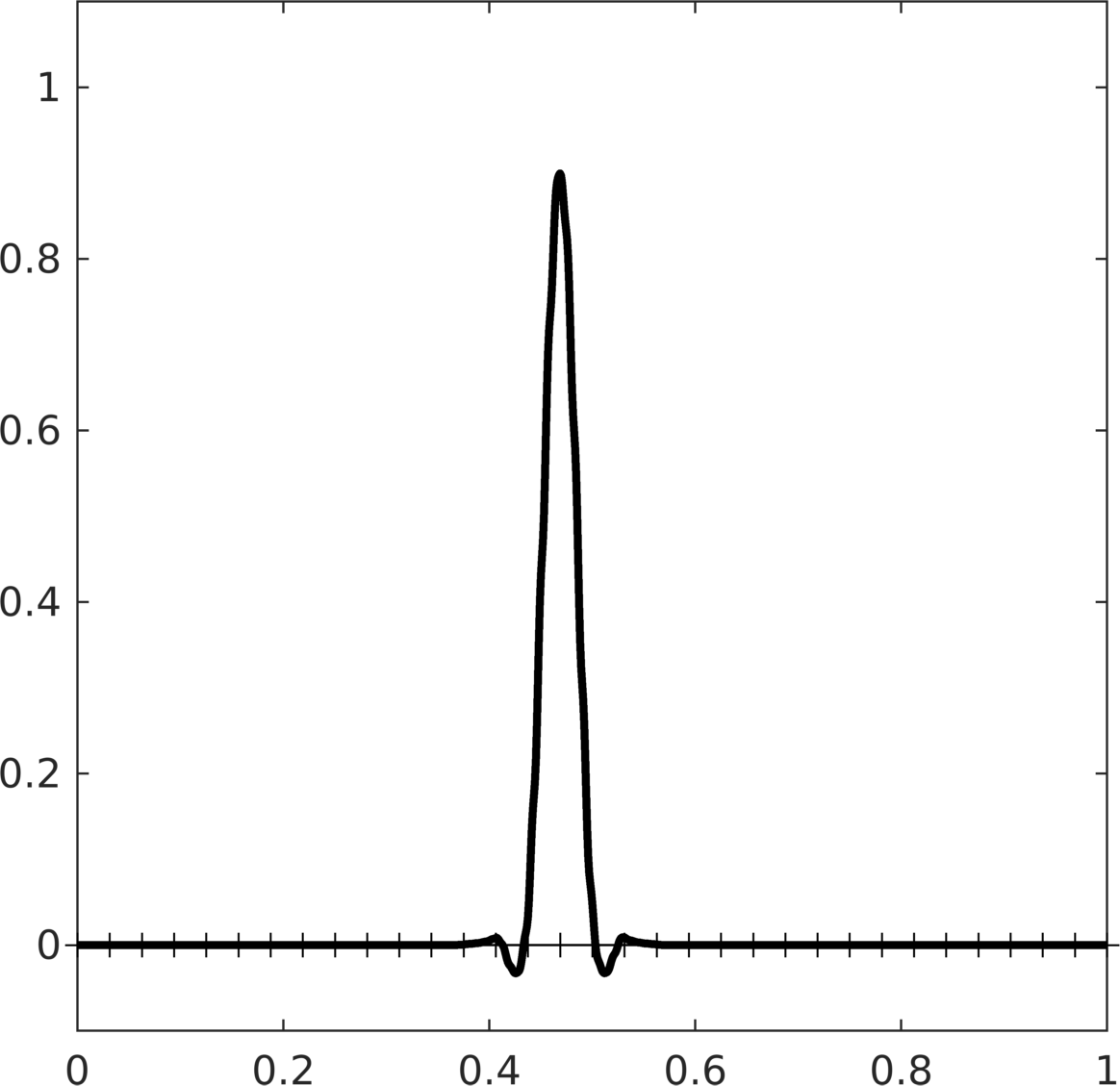}
		\caption[]%
		{\small Corrected hat function $(1-\cor)\Lambda$}    
		\label{fig:hatLOD}
	\end{subfigure}
	\quad
	\begin{subfigure}[b]{0.41\textwidth}   
		\centering 
		\includegraphics[width=\textwidth]{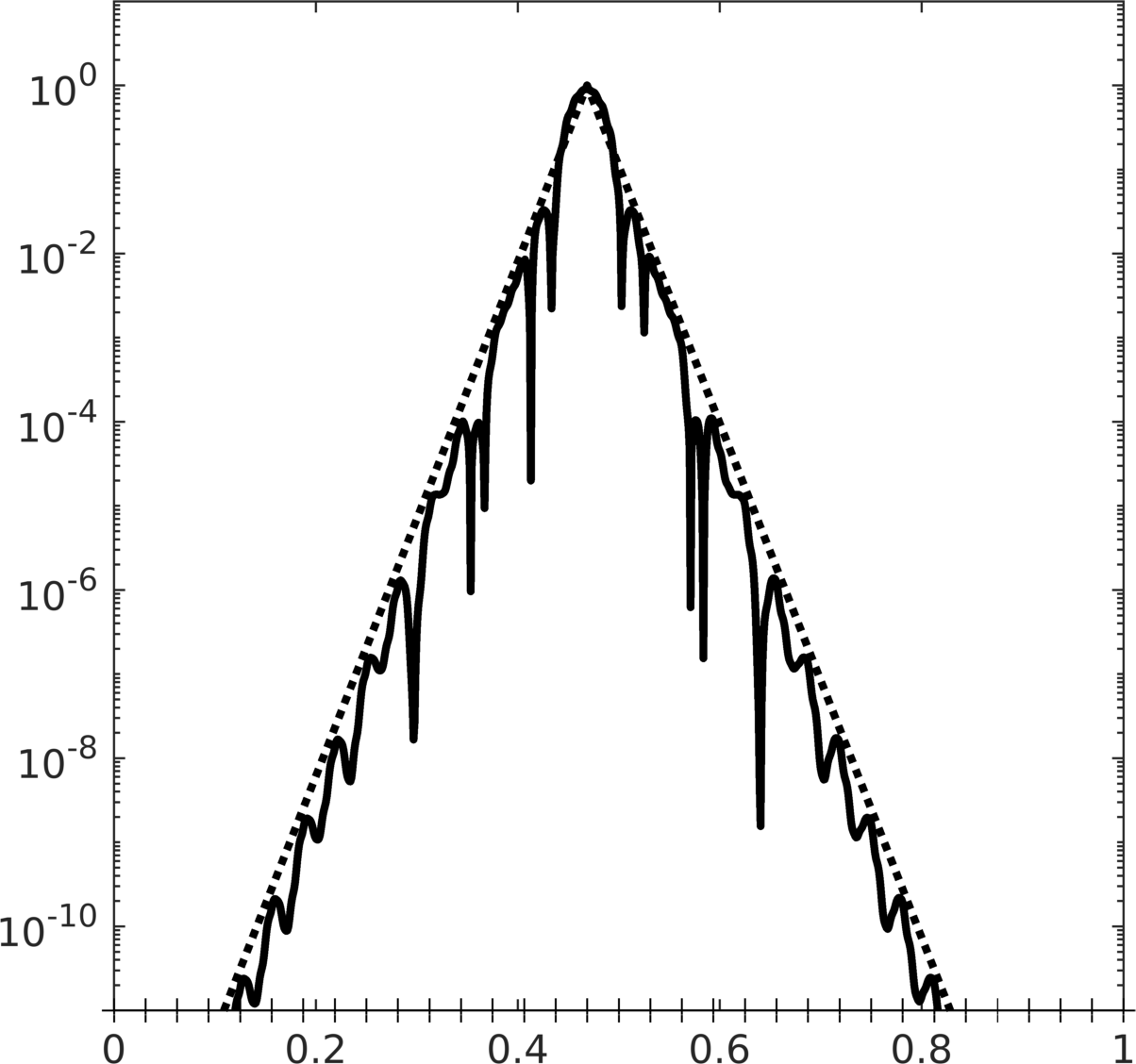}
		\caption[]%
		{\small $|(1-\cor)\Lambda|$ in logarithmic scale}    
		\label{fig:decay}
	\end{subfigure}
	\caption[]
	{\small Illustration of a one-dimensional hat function and its correction for the coefficient $A(x) = (2+\sin(2^8\pi x))^{-1}$.} 
	\label{fig:1dLODfct}
\end{figure*}
Therefore, we use localized element corrections $\cor_{T,\ell}v_H$ which are obtained by solving
\eqref{eq:cor} on local patches $\mathrm{N}^\ell(T)$ as defined in \eqref{eq:Nb}, i.e., 
\begin{equation}\label{eq:corEll}
a(\cor_{T,\ell} v_H, w) = a_T(v_H, w)
\end{equation}
for all $w \in W$ with $w\vert_{\Omega\backslash\mathrm{N}^\ell(T)} = 0$. 
As above, we define the full correction $\cor_\ell\colon \VHo \to W$ by 
\begin{equation*}
\cor_\ell := \sum_{T \in \tri_H}\cor_{T,\ell}.
\end{equation*}
As shown in \cite{HenP13}, we get, for any $v_H \in \VHo$,
\begin{equation}\label{e:diffcor1}
\|\nabla(\cor-\cor_\ell)v_H\|_{L^2(\Omega)} \leq Ce^{-c\ell} \|\nabla v_H\|_{L^2(\Omega)}.
\end{equation}
The constant $c$ only depends on the contrast $\beta/\alpha$, although this dependence seems pessimistic in many cases of practical relevance \cite{PS16,HM17}.
For $v_H \in \VH$, we set $\cor v_H := \cor \Ri v_H$ and $\cor_\ell v_H := \cor_\ell \Ri v_H$.

Given a discretized extension operator $\EbH\colon\XH\to\VH$ 
and the corresponding restriction operator $\RiH\colon\VH\to\VHo$ 
as defined in Section~\ref{ss:coarsedisc}, 
the discretized version of \eqref{eq:weakformH10} reads: 
find $u_H = \RiH u_H + \EbH u_H^0 \in \VH$ such that
\begin{equation}\label{eq:discweakformH10}
a((1-\cor_\ell)\RiH u_H, (1-\cor_\ell)v_H) = (f_H,v_H) - a(\EbH u^0,(1-\cor_\ell)v_H)
\end{equation}
for all $v_H \in \VHo$. 
In \eqref{eq:discweakformH10}, $f_H := \Pi_H f$ is the $L^2$ projection of $f$ onto $\VH$, and $u^0 \in \XH$. 

\subsection{Error estimates}

The following theorem shows that the approximation error of the presented approach scales optimally with $H$ and that it is independent of the variations of the diffusion coefficient.

\begin{theorem}[Error of the forward effective model]\label{t:errForward}
	Let $u \in \V$ be the solution of \eqref{eq:weakformH1} and $u_H \in \VH$ the solution of \eqref{eq:discweakformH10}, for given boundary data $u^0 \in \XH$, a right-hand side $f \in  L^2(\Omega)$, as well as an oversampling parameter $\ell$. 
	
	For $g \in L^2(\Omega)$, let $\hat u(g) \in \V$ denote the solution of \eqref{eq:weakformH1} with right-hand side $g$ and boundary condition $u^0 = 0$, and let us introduce the worst-case best-approximation error
	\begin{equation*}\label{eq:wcba}
	\mathbf{wcba}(A,\tri_H) := \sup_{g \in L^2(\Omega)\backslash \{0\}}\,\inf_{v_H \in \VHo}\frac{\|\Ri \hat u(g) - v_H\|_{L^2(\Omega)}}{\|g\|_{L^2(\Omega)}}.
	\end{equation*}
	It holds 
	\begin{equation*}\label{eq:errorForward}
	\| u - u_H \|_{L^2(\Omega)} \lesssim \left(H^2 + e^{-c\ell} + \mathbf{wcba}(A,\tri_H)\right) \left(\|f\|_{L^2(\Omega)} + \|u^0\|_\X\right).
	\end{equation*}
\end{theorem}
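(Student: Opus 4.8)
The plan is to compare the localized discrete solution $u_H$ with an \emph{ideal} (non-localized) surrogate and then run an Aubin--Nitsche duality argument. First I would introduce the ideal method obtained by replacing $\cor_\ell$ with the exact corrector $\cor$ in \eqref{eq:discweakformH10}, and denote its multiscale solution by $u_H^{\mathrm{ms}} := (1-\cor)\RiH \tilde u_H \in (1-\cor)\VHo$. A triangle inequality then splits the error as $\|u-u_H\|_{L^2} \le \|u-u_H^{\mathrm{ms}}\|_{L^2} + \|u_H^{\mathrm{ms}}-u_H\|_{L^2}$. The second (localization) contribution I would treat directly: subtracting the two discrete problems expresses the difference through $(\cor-\cor_\ell)$ applied to the coarse data, so that the exponential decay bound \eqref{e:diffcor1}, the stability of the correctors, and a Friedrichs/Poincar\'e inequality yield a bound of order $e^{-c\ell}\bigl(\|f\|_{L^2}+\|u^0\|_\X\bigr)$.

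For the ideal error I would use the linearity splitting \eqref{eq:opDec} to treat the boundary response and the right-hand side response separately, and exploit two orthogonality relations that follow from \eqref{eq:ortho} and from testing \eqref{eq:weakformH1} against coarse and corrected coarse functions. For the $f$-response (with $u^0=0$) the error $e$ turns out to be $a$-orthogonal to $\VHo$ up to a defect supported in $W=\Ker I_H$ that is driven solely by $f$, precisely because the scheme uses the uncorrected datum $(f_H,v_H)$ rather than $(f,(1-\cor)v_H)$. Letting $z$ solve the dual problem with load $e$, so that $\|e\|_{L^2}^2=a(e,z)$, I would subtract from $z$ its best $L^2$-approximation $v_H^\ast\in\VHo$, decompose $z-v_H^\ast$ along $\VHo\oplus W$, and collapse $\|e\|_{L^2}^2$ to pairings of $f$ with $(1-I_H)(z-v_H^\ast)$ and $I_H(z-v_H^\ast)$. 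Since $z=\Ri\hat u(e)$, the norm $\|z-v_H^\ast\|_{L^2}$ is bounded by $\mathbf{wcba}(A,\tri_H)\,\|e\|_{L^2}$ by definition, and the $L^2$-stability of $I_H$ turns the remaining coarse pairing into the same quantity; dividing by $\|e\|_{L^2}$ produces the asserted $\mathbf{wcba}$ contribution in the $f$-response.

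The boundary-and-operator response (with $f=0$) is consistent, since $\Eb u^0=\EbH u^0$ for $u^0\in\XH$. Here the ideal error is genuinely $a$-orthogonal to the multiscale space $(1-\cor)\VHo$, so that $u_H^{\mathrm{ms}}$ is the elliptic projection of $\Ri u$ onto that space. I would then bound this part by the standard LOD $L^2$-estimate: a single Aubin--Nitsche step combined with the corrector approximation property that follows from \eqref{eq:IH} and \eqref{eq:cor} (as in \cite{MalP14,HenP13,Pet16}) supplies the coarse-scale factor, giving the $H^2$ term tested against $\|u^0\|_\X$. Collecting the localization, best-approximation, and consistent contributions then yields the stated bound with the three additive sources $H^2$, $e^{-c\ell}$, and $\mathbf{wcba}(A,\tri_H)$.

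The main obstacle I anticipate is twofold. On the analytical side, obtaining the sharp $\mathbf{wcba}$ term rather than a crude $O(H)$ estimate hinges on testing with the best $L^2$-approximation of the dual solution and on the $L^2$-stability of the quasi-interpolation $I_H$, so the argument must be arranged so that \emph{every} coarse pairing with $f$ is re-expressed through $\|z-v_H^\ast\|_{L^2}$. On the bookkeeping side, the interplay of the extension and restriction operators $\EbH,\RiH$ with the correctors inside the two discrete formulations must be tracked carefully, in particular to confirm that the boundary contribution is truly consistent and therefore enters only at the coarse order claimed in the statement and not through an additional $W$-defect.
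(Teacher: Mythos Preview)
Your splitting through the multiscale function $u_H^{\mathrm{ms}}=(1-\cor)\RiH\tilde u_H$ has a genuine gap in the localization step. The quantity $u_H^{\mathrm{ms}}-u_H$ is \emph{not} expressible through $(\cor-\cor_\ell)$ alone: after restoring the boundary extension one has $u_H^{\mathrm{ms}}-u_H=\RiH(\tilde u_H-u_H)-\cor\RiH\tilde u_H$, and while the first piece can indeed be shown to be $O(e^{-c\ell})$ by subtracting the two discrete problems, the second piece is the \emph{full} ideal corrector, which does not depend on $\ell$ at all. Its generic $L^2$ bound via $\cor\RiH\tilde u_H\in W$ and \eqref{eq:IH} is only $O(H)$, which would spoil the claimed $H^2$ rate whenever $\mathbf{wcba}(A,\tri_H)=o(H)$. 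One can in fact show $\|\cor\RiH\tilde u_H\|_{L^2(\Omega)}\lesssim (H^2+\mathbf{wcba})(\|f\|_{L^2(\Omega)}+\|u^0\|_\X)$, but that estimate is essentially the theorem itself, so routing the argument through $u_H^{\mathrm{ms}}$ becomes circular.

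The paper sidesteps this by never leaving $\VH$. It introduces two \emph{Petrov--Galerkin} surrogates $\bar u_H,\tilde u_H\in\VH$ (trial space $\VHo$, test space $(1-\cor)\VHo$ and $(1-\cor_\ell)\VHo$ respectively, with $f$ and $f_H$ on the right) and uses the three-term split $u-u_H=(u-\bar u_H)+(\bar u_H-\tilde u_H)+(\tilde u_H-u_H)$. Because every discrete object is a coarse function, the last two differences are driven purely by $(\cor-\cor_\ell)$ once the orthogonality \eqref{eq:ortho} is used to rewrite $a(\cor_\ell\RiH\tilde u_H,(1-\cor_\ell)v_H)=a(\cor_\ell\RiH\tilde u_H,(\cor-\cor_\ell)v_H)$; this is where $e^{-c\ell}$ legitimately appears. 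The remaining piece $\|u-\bar u_H\|_{L^2(\Omega)}$ is then taken directly from \cite[Prop.~1]{GP17}, which already delivers $H^2+\mathbf{wcba}$ for the coarse Petrov--Galerkin solution, so no separate $(f,u^0)$-split or hand-rolled duality argument is needed. If you wish to retain your Aubin--Nitsche sketch, aim it at $\|u-\bar u_H\|_{L^2(\Omega)}$ with $\bar u_H\in\VH$; the essential point is that the intermediate comparison object must be coarse, not corrected.
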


\begin{proof}%
	We split the error $u - u_H = (u - \bar u_H) + (\bar u_H - \tilde u_H) + (\tilde u_H - u_H)$ with the solutions $\bar u_H$ and $\tilde u_H$ of the auxiliary problems
	\begin{equation*}\label{eq:PGlod}
	a(\RiH \bar u_H, (1-\cor)v_H) = (f,v_H) - a(\EbH u^0,(1-\cor)v_H)
	\end{equation*}
	and
	\begin{equation*}\label{eq:PGlodEll}
	a(\RiH \tilde u_H, (1-\cor_\ell)v_H) = (f_H,v_H) - a(\EbH u^0,(1-\cor_\ell)v_H).
	\end{equation*}
	To bound $e_H := u_H - \tilde u_H$, we observe that 
	\begin{equation}\label{eq:aId}
	a((1-\cor_\ell)e_H, (1-\cor_\ell)v_H) =  a(\cor_\ell \RiH \tilde u_H,(1-\cor_\ell)v_H) = a(\cor_\ell \RiH \tilde u_H, (\cor - \cor_\ell)v_H),
	\end{equation} 
	by the orthogonality property \eqref{eq:ortho}. 
	For the next steps, we require the identity
	\begin{equation}\label{eq:IHid}
	v_H = I_H (1-\cor_\ell) v_H,
	\end{equation}
	which follows from the fact that $\cor_\ell v_H \in W = \Ker I_H$ and thus $I_H\cor_\ell v_H = 0$.
	Testing with $v_H = e_H$ in \eqref{eq:aId} and using \eqref{e:diffcor1} as well as
	\begin{equation*}
	\|\nabla e_H\|_{L^2(\Omega)} = \|\nabla I_H (1-\cor_\ell)e_H\|_{L^2(\Omega)} \lesssim \|A^{1/2}\nabla (1-\cor_\ell)e_H\|_{L^2(\Omega)},
	\end{equation*}
	we obtain
	\begin{equation*}
	\begin{aligned}
	\|A^{1/2}\nabla(1-\cor_\ell)e_H\|^2_{L^2(\Omega)} &= a((1-\cor_\ell)e_H,(1-\cor_\ell)e_H) \\&= a(\cor_\ell \RiH \tilde u_H, (\cor - \cor_\ell)e_H) \\&\lesssim e^{-c\ell}\|\nabla \cor_\ell \RiH \tilde u_H\|_{L^2(\Omega)} \|A^{1/2}\nabla(1-\cor_\ell)e_H\|^2_{L^2(\Omega)}.
	\end{aligned}
	\end{equation*}
	Further, it follows that
	\begin{equation}\label{eq:errEH}
	\|e_H\|_{L^2(\Omega)} \lesssim \|A^{1/2}\nabla(1-\cor_\ell)e_H\|_{L^2(\Omega)} \lesssim e^{-c\ell}\left(\|f\|_{L^2(\Omega)} + \|u^0\|_\X\right)
	\end{equation}
	where we use \eqref{eq:IHid} and \eqref{eq:IH}.
	As a next step, we bound $\bar e_H := \tilde u_H - \bar u_H$. 
	We note that 
	\begin{equation*}
	a(\bar e_H, (1-\cor)v_H) = a(\RiH \tilde u_H + \EbH u^0, (\cor_\ell - \cor)v_H)
	\end{equation*}
	for any $v_H \in \VHo$. 
	With $v_H = \bar e_H$ and similar arguments as above, we obtain
	\begin{equation}\label{eq:errEHbar}
	\|\bar e_H\|_{L^2(\Omega)} \lesssim \|A^{1/2}\nabla(1-\cor)\bar e_H\|_{L^2(\Omega)} \lesssim e^{-c\ell}\left(\|f\|_{L^2(\Omega)} + \|u^0\|_\X\right).
	\end{equation}
	The error $u - \bar u_H$ can be estimated using \cite[Prop.~1]{GP17} which also holds for inhomogeneous Dirichlet boundary conditions, i.e.,
	\begin{equation}\label{eq:errE}
	\|u - \bar u_H\|_{L^2(\Omega)} \lesssim \left(H^2 + \mathbf{wcba}(A,\tri_H)\right) \left(\|f\|_{L^2(\Omega)} + \|u^0\|_\X\right).
	\end{equation}
	The triangle inequality, \eqref{eq:errEH}, \eqref{eq:errEHbar}, and \eqref{eq:errE} yield the desired estimate.
\end{proof}
To illustrate the advantage of the LOD, Figure~\ref{fig:FEMvsLOD} shows the error between the numerical solution on a microscopic scale and the numerical solutions using the LOD and a classical finite element approximation on a coarse scale, respectively.
The finite element method suffers from pre-asymptotic effects when the micro scale is not resolved, while the LOD produces a finite element function with much better approximation properties.
\begin{figure*}  
	\centering
	\captionsetup[subfigure]{labelformat=empty}
	\begin{subfigure}[b]{0.46\textwidth}  
		\centering 
		\includegraphics[width=\textwidth]{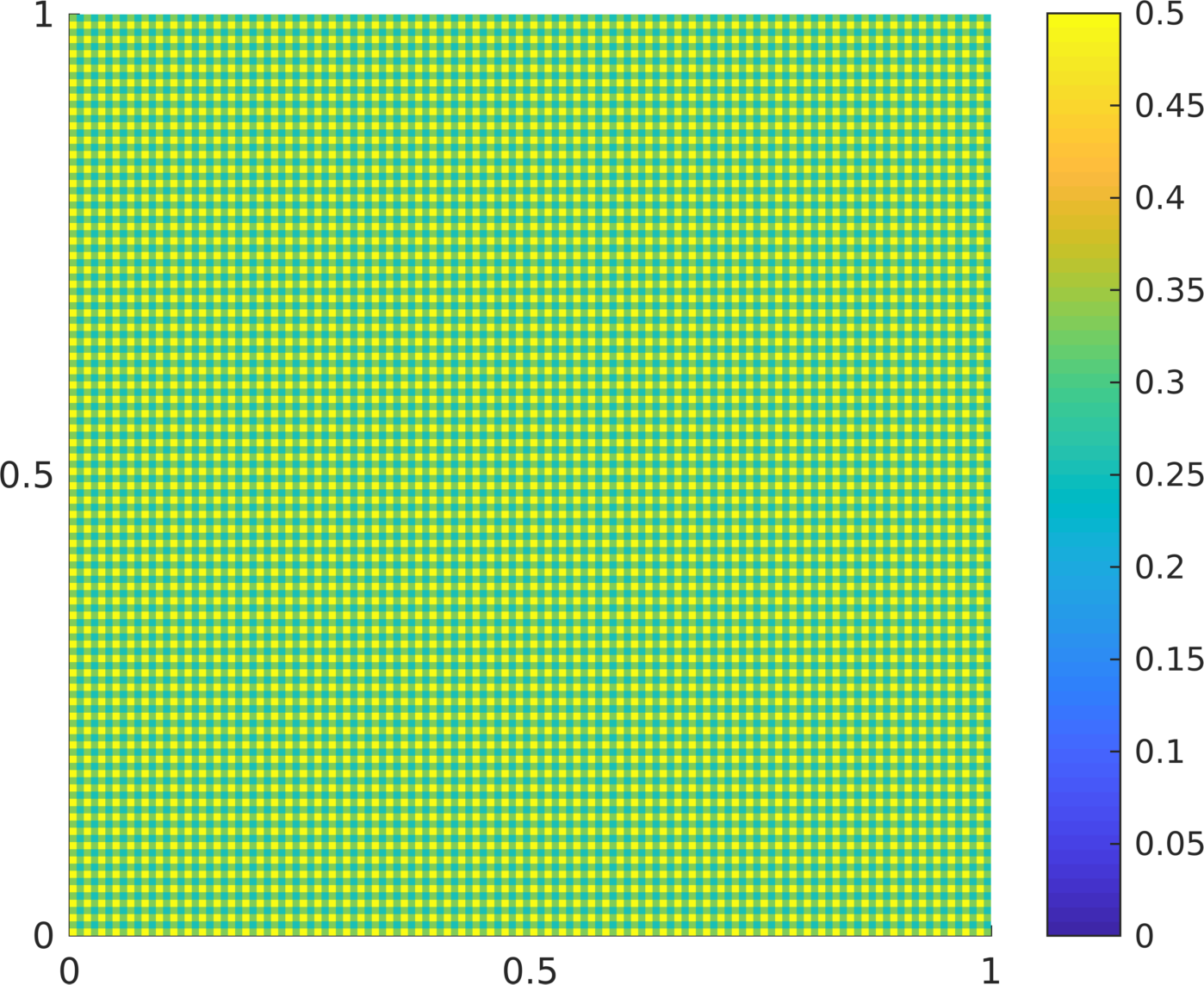}
		\caption[]%
		{\small Coefficient $A$}   
		\label{fig:coefficient}
	\end{subfigure}
	\quad
	\begin{subfigure}[b]{0.4\textwidth}
		\centering
		\includegraphics[width=\textwidth]{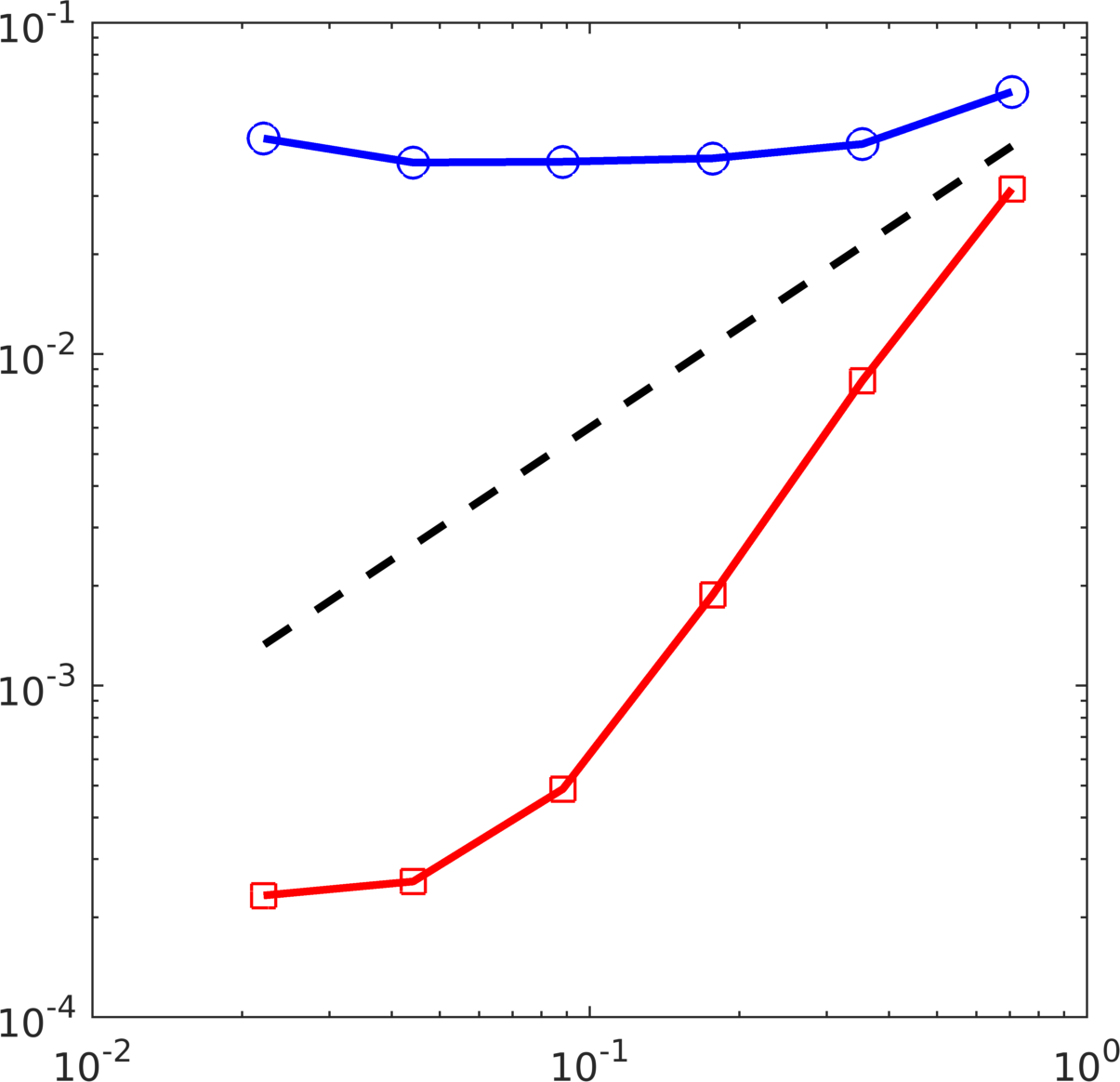}
		\caption
		{\small $L^2$ errors of FEM ({\protect \tikz{ \draw[line width=1.pt, blue] circle (0.6ex);}}) and LOD ({\protect \tikz{ \draw[line width=1.pt, red] (0,0) rectangle (0.18,0.18);}})}
		\label{fig:FEMvsLOD_err}    
	\end{subfigure}
	\caption[]
	{\small Left: An example of a microscopic coefficient. 
		Right: Comparison of the finite element method and the LOD on ${\Omega=[0,1]^2}$ for $f=1$, $u^0=0$, and $\ell=2$ for the solution of the diffusion problem corresponding to the depicted scalar coefficient. 
		The dashed line indicates linear convergence.}
	\label{fig:FEMvsLOD}
\end{figure*}

We emphasize that, choosing $\ell$ large enough (i.e., $\ell \gtrsim |\log H|$), it holds $e^{-c\ell} \lesssim H$ or even $e^{-c\ell} \lesssim H^2$. As discussed in \cite{GP17}, the worst-case best-approximation error is at least $\mathcal{O}(H)$, and it scales possibly even better with $H$ for certain pre-asymptotic regimes. 
In this work, we are mainly interested in solving the inverse problem and do not focus on optimizing the error estimates derived above. 

We can now define the discretized operator corresponding to \eqref{eq:discweakformH10} by
\begin{equation}\label{eq:opLell}
\begin{aligned}
\DtoN_{A,\ell}^\mathrm{eff}\colon \XH \times L^2(\Omega) & \to && \VH,\\
(u^{0},f) &\mapsto&& u_H, \text{ where } u_H \text{ solves } \eqref{eq:discweakformH10}.
\end{aligned} 
\end{equation}
Using Theorem~\ref{t:errForward} and the distance function defined in \eqref{eq:distf}, we obtain the following result.
\begin{corollary}[Error of the effective forward operator]\label{cor:errForward}
	Let $\ell \gtrsim |\log H|$. 
	Then it holds
	\begin{equation*}\label{eq:errOp}
	\dist_f(\DtoN_A,\DtoN_{A,\ell}^\mathrm{eff})\lesssim H.
	\end{equation*}
\end{corollary}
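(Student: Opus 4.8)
The plan is to estimate the two summands appearing in the definition \eqref{eq:distf} of $\dist_f$ separately, each by a direct application of Theorem~\ref{t:errForward}, and then to absorb the resulting error contributions into $H$ using the hypothesis $\ell \gtrsim |\log H|$. Writing $u = \DtoN_A(u^0,f)$ and $u_H = \DtoN_{A,\ell}^{\mathrm{eff}}(u^0,f)$, the key observation is that Theorem~\ref{t:errForward} already controls $\|u - u_H\|_{L^2(\Omega)}$ uniformly, with the single factor $H^2 + e^{-c\ell} + \mathbf{wcba}(A,\tri_H)$ multiplying the data norm $\|f\|_{L^2(\Omega)} + \|u^0\|_\X$. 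Both operators are affine in $(u^0,f)$ and split according to \eqref{eq:opDec}, so I can isolate the boundary-to-solution and source-to-solution contributions by switching off one datum at a time.

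For the first summand I would set $f = 0$ and regard $u^0 \mapsto \DtoN_A(u^0,0) - \DtoN_{A,\ell}^{\mathrm{eff}}(u^0,0)$ as a linear map $\XH \to L^2(\Omega)$. Theorem~\ref{t:errForward} with $f=0$ gives, for every $u^0 \in \XH$,
\begin{equation*}
\big\| \DtoN_A(u^0,0) - \DtoN_{A,\ell}^{\mathrm{eff}}(u^0,0) \big\|_{L^2(\Omega)} \lesssim \big(H^2 + e^{-c\ell} + \mathbf{wcba}(A,\tri_H)\big)\,\|u^0\|_\X ,
\end{equation*}
and dividing by $\|u^0\|_\X$ and taking the supremum yields the same bound for the operator norm $\| \cdot \|_{\mathcal{L}(\XH;L^2(\Omega))}$. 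For the second summand I would set $u^0 = 0$ and apply Theorem~\ref{t:errForward} to the fixed right-hand side $f$, obtaining
\begin{equation*}
\big\| \DtoN_A(0,f) - \DtoN_{A,\ell}^{\mathrm{eff}}(0,f) \big\|_{L^2(\Omega)} \lesssim \big(H^2 + e^{-c\ell} + \mathbf{wcba}(A,\tri_H)\big)\,\|f\|_{L^2(\Omega)} .
\end{equation*}

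Inserting both bounds into \eqref{eq:distf} shows that $\dist_f(\DtoN_A,\DtoN_{A,\ell}^{\mathrm{eff}})$ is controlled by $H^2 + e^{-c\ell} + \mathbf{wcba}(A,\tri_H)$. It then remains to observe that each of the three terms is $\lesssim H$: the term $H^2$ trivially, the localization error $e^{-c\ell} \lesssim H$ because the hypothesis $\ell \gtrsim |\log H|$ forces $c\ell \gtrsim |\log H|$, and $\mathbf{wcba}(A,\tri_H) \lesssim H$ by the worst-case best-approximation analysis of \cite{GP17}. Combining these estimates gives $\dist_f(\DtoN_A,\DtoN_{A,\ell}^{\mathrm{eff}}) \lesssim H$.

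I do not anticipate a genuine obstacle here, as the statement is a direct corollary of Theorem~\ref{t:errForward}; the only care needed is the bookkeeping of norms, and in particular the recognition that the uniform-in-$u^0$ estimate of Theorem~\ref{t:errForward} is precisely what converts the pointwise bound into an operator-norm bound for the first summand. The one input external to Theorem~\ref{t:errForward} is the scaling $\mathbf{wcba}(A,\tri_H) = \mathcal{O}(H)$, which is the sole place where the $\mathcal{O}(H)$ rate (rather than a higher power) enters and thereby determines the overall rate asserted by the corollary.
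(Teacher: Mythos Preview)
Your proposal is correct and follows exactly the approach the paper intends: the corollary is stated there without a detailed proof, merely as an immediate consequence of Theorem~\ref{t:errForward} together with the definition~\eqref{eq:distf}, and the paper explicitly records just before the corollary that $\ell\gtrsim|\log H|$ yields $e^{-c\ell}\lesssim H$ and that $\mathbf{wcba}(A,\tri_H)=\mathcal{O}(H)$ by~\cite{GP17}. Your write-up simply spells out the splitting into the two summands of $\dist_f$ and the passage from the pointwise estimate to the operator norm, which is precisely the bookkeeping the paper leaves implicit.
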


\subsection{Reformulation using the effective stiffness matrix}

As described in Section~\ref{ss:stiffmatrix}, we can alternatively represent the operator $\DtoN_{A,\ell}^\mathrm{eff}$ using the stiffness matrix corresponding to the discrete formulation \eqref{eq:discweakformH10}. 
Given a coefficient $A\in\AC$, the corresponding LOD stiffness matrix $S_H^\ell(A)$ is defined by
\begin{equation}\label{eq:S_HLOD} 
S_H^\ell(A)[i,j] := a((1-\cor_\ell)\Lambda_{z_j}, (1-\cor_\ell)\Lambda_{z_i}),\;
\mbox{for}\; i,\, j \in \{1,\dots,m\},
\end{equation}
with the ordering $i \mapsto z_i$ as in \eqref{eq:S_H}.
Further, the set of LOD stiffness matrices with oversampling parameter $\ell$ based on admissible coefficients is given by 
\begin{equation}\label{eq:defStiffness}
\SM(\ell,\tri_H) := \left\{S_H^\ell(A)\in \R^{m\times m}_\mathrm{sym}\,:\,A \in \AC \right\}.
\end{equation}
By construction of the correctors $\cor_\ell$ in \eqref{eq:corEll}, it holds \mbox{$\SM(\ell,\tri_H) \subseteq \MS(\ell,\tri_H)$} with the set $\MS(\ell,\tri_H)$ defined in Section~\ref{sec:forward}.  	

We can now prove the following lemma using the operator defined in \eqref{eq:opLSH}.

\begin{lemma}[Alternative representation of the effective forward operator]\label{l:LH=Lell}
	Let $S_H^\ell(A) \in \SM(\ell,\tri_H)$ be the LOD stiffness matrix corresponding to \eqref{eq:discweakformH10}. 
	Assume that $\Eb$ fulfills $\cor_\ell\EbH v^0 = \cor_\ell\Eb\vert_{\XH} v^0 = 0$ for any $v^0 \in \XH$. 
	Then it holds
	\begin{equation}\label{eq:LH=Lell} 
	\DtoN_{S_H^\ell(A)}(u^0,f) = \DtoN_{A,\ell}^\mathrm{eff}(u^0,f)
	\end{equation}
	for all $u^0\in\XH,\,f\in L^2(\Omega)$.
\end{lemma}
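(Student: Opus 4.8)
The plan is to show that the linear system defining $\DtoN_{S_H^\ell(A)}(u^0,f)$ in \eqref{eq:opLSH} is nothing but the Galerkin problem \eqref{eq:discweakformH10} written out in the hat-function basis. Since both operators return $u_H = \RiH u_H + \EbH u^0$ with the same prescribed boundary values $u_H = u^0$ on $\partial\Omega$, it suffices to prove that the interior part $\RiH u_H$ coincides in both formulations.

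First I would expand $\RiH u_H \in \VHo$ in the hat functions associated with the inner nodes and test \eqref{eq:discweakformH10} against $v_H = \Lambda_{z_i}$ for each interior node $z_i$. By the definition \eqref{eq:S_HLOD} of the LOD stiffness matrix, assembling the contributions $a((1-\cor_\ell)\RiH u_H,(1-\cor_\ell)\Lambda_{z_i})$ over all interior indices yields exactly the vector $S_{H,0}\RiH u_H$, where I use $S_{H,0}=\RiH S_H \RiH^T$ together with the paper's identification of finite element functions with their coefficient vectors.

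Next I would handle the two terms on the right-hand side. The source term is immediate: $(f_H,\Lambda_{z_i})$ is the $i$-th entry of $M_H f_H$ by definition of the mass matrix, and restricting to the interior rows produces $R_H M_H f_H$. For the boundary contribution, \eqref{eq:S_HLOD} gives $a((1-\cor_\ell)\EbH u^0,(1-\cor_\ell)\Lambda_{z_i}) = (\RiH S_H \EbH u^0)_i$, whereas the term actually appearing in \eqref{eq:discweakformH10} is $a(\EbH u^0,(1-\cor_\ell)\Lambda_{z_i})$, with the corrector acting only on the test function. This is precisely where the hypothesis $\cor_\ell \EbH u^0 = 0$ enters: it lets me replace $(1-\cor_\ell)\EbH u^0$ by $\EbH u^0$ and thereby identify the two boundary terms. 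Collecting these identities reproduces the matrix equation $S_{H,0}\RiH u_H = R_H M_H f_H - \RiH S_H \EbH u^0$.

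I expect the main obstacle to be the careful bookkeeping of the extension term rather than any deep estimate: the stiffness matrix $S_H^\ell(A)$ always corrects both of its arguments, while the Galerkin formulation corrects only the test function in the data term, and the assumption on $\Eb$ is exactly what removes this discrepancy. To close the argument I would note that $a((1-\cor_\ell)\,\cdot\,,(1-\cor_\ell)\,\cdot\,)$ is coercive on $\VHo$ — which follows from the stability estimate \eqref{eq:IH}, the identity \eqref{eq:IHid}, and the ellipticity bounds encoded in $\AC$ — so that $S_{H,0}$ is invertible and each formulation has a unique solution. As the two linear systems coincide, their solutions agree, giving \eqref{eq:LH=Lell}.
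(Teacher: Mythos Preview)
Your proposal is correct and follows essentially the same route as the paper: expand in the hat-function basis, test \eqref{eq:discweakformH10} against the interior hat functions, use the assumption $\cor_\ell \EbH u^0 = 0$ to rewrite $a(\EbH u^0,(1-\cor_\ell)\Lambda_{z_i})$ as $a((1-\cor_\ell)\EbH u^0,(1-\cor_\ell)\Lambda_{z_i})$, and identify the result with the matrix system in \eqref{eq:opLSH}. Your explicit remark on coercivity and uniqueness of $S_{H,0}^{-1}$ is a nice completeness point that the paper leaves implicit.
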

\begin{remark}
	Possible choices for the extension operator $\Eb$ that fulfill the assumptions of Lemma~\ref{l:LH=Lell} are those that extend functions in $\XH$ to functions in $\VH$ that are only supported on one layer of elements away from the boundary.
\end{remark}
\begin{proof}[Proof of Lemma~\ref{l:LH=Lell}]
	Write $u_H = \sum_{j = 1}^{m} u_j \Lambda_{z_j}$ and observe that \eqref{eq:discweakformH10} is equivalent to
	\begin{equation}\label{eq:discweakformLambda}
	\sum_{j\,:\,z_j \not\subset \partial\Omega}u_j\, a((1-\cor_\ell)\RiH \Lambda_{z_j}, (1-\cor_\ell)\Lambda_{z_i}) = (f_H,\Lambda_{z_i}) - a(\EbH u^0,(1-\cor_\ell)\Lambda_{z_i}) 
	\end{equation}
	for all $i \in \{k\,:\,z_k \not\subset \partial\Omega\}$. 
	Inserting $f_H = \sum_{j = 1}^{m} f_j \Lambda_{z_i}$, using the fact that 
	\begin{equation*}
	a(\EbH u^0,(1-\cor_\ell)v_H)=a((1-\cor_\ell)\EbH u^0,(1-\cor_\ell)v_H)
	\end{equation*}
	for any $v_H \in \VHo$, and the definition \eqref{eq:S_HLOD}, we can write equation \eqref{eq:discweakformLambda} as
	\begin{equation*} 
	S_{H,0}^\ell(A) \RiH u_H = R_H M_H f_H - \RiH S_H^\ell(A) \EbH u^0,
	\end{equation*}
	which shows \eqref{eq:LH=Lell}.
\end{proof}
Lemma~\ref{l:LH=Lell} and Corollary~\ref{cor:errForward} show that the operators $\DtoN_A(\cdot,f)$ and $\DtoN_{S_H^\ell(A)}(\cdot,f)$ are close as operators from $\XH$ to $\V$ if $\ell$ is chosen large enough. 
This property is essentially the message of Theorem~\ref{t:errEff} in Section~\ref{sec:forward}. In particular, the theorem holds with the explicit choice $\calG := 1-\cor_\ell$.

\section{Numerical Experiments}\label{sec:numerics}

In this section, we present some numerical experiments that illustrate the capability of the proposed method. 
The inverse problem is based on synthetic data, i.e., the coarse measurements used to feed the inversion algorithm are obtained from finite element functions in $\Vh$, defined on a mesh with mesh size $h = \sqrt{2}\cdot 2^{-9}$, that resolve the micro-scale features of the diffusion coefficient. 
Furthermore, the data are perturbed by multiplicative random noise with intensity up to 5\%.

\subsection{Example 1: full boundary data}\label{ss:fulldata}

In the first experiment, we assume to have full information on the operator (matrix) $\tilde\DtoN^\mathrm{eff}$, i.e., we assume that measurements in $\Omega$ on the coarse scale $H = \sqrt{2}\cdot2^{-5}$ for a complete basis of $\XH$ are available. 
The scalar coefficient $A$ for which the effective behavior should be recovered is constant on a mesh $\tri_\varepsilon$ with $\varepsilon = \sqrt{2}\cdot2^{-7}$ and the value on each element is independently obtained as a uniformly distributed random number between 1 and 50, i.e., for any $T \in \tri_\varepsilon$ we have $A|_{T} \sim U(1,50)$ (see Figure~\ref{fig:fulldata}, left, for the explicit sample used here). 
We set $f=1$ and start the inverse iteration with the finite element stiffness matrix $S_H^0$ based on the constant coefficient with value $1$. 
\begin{figure}[]
	\centering
	\includegraphics[width=.45\textwidth]{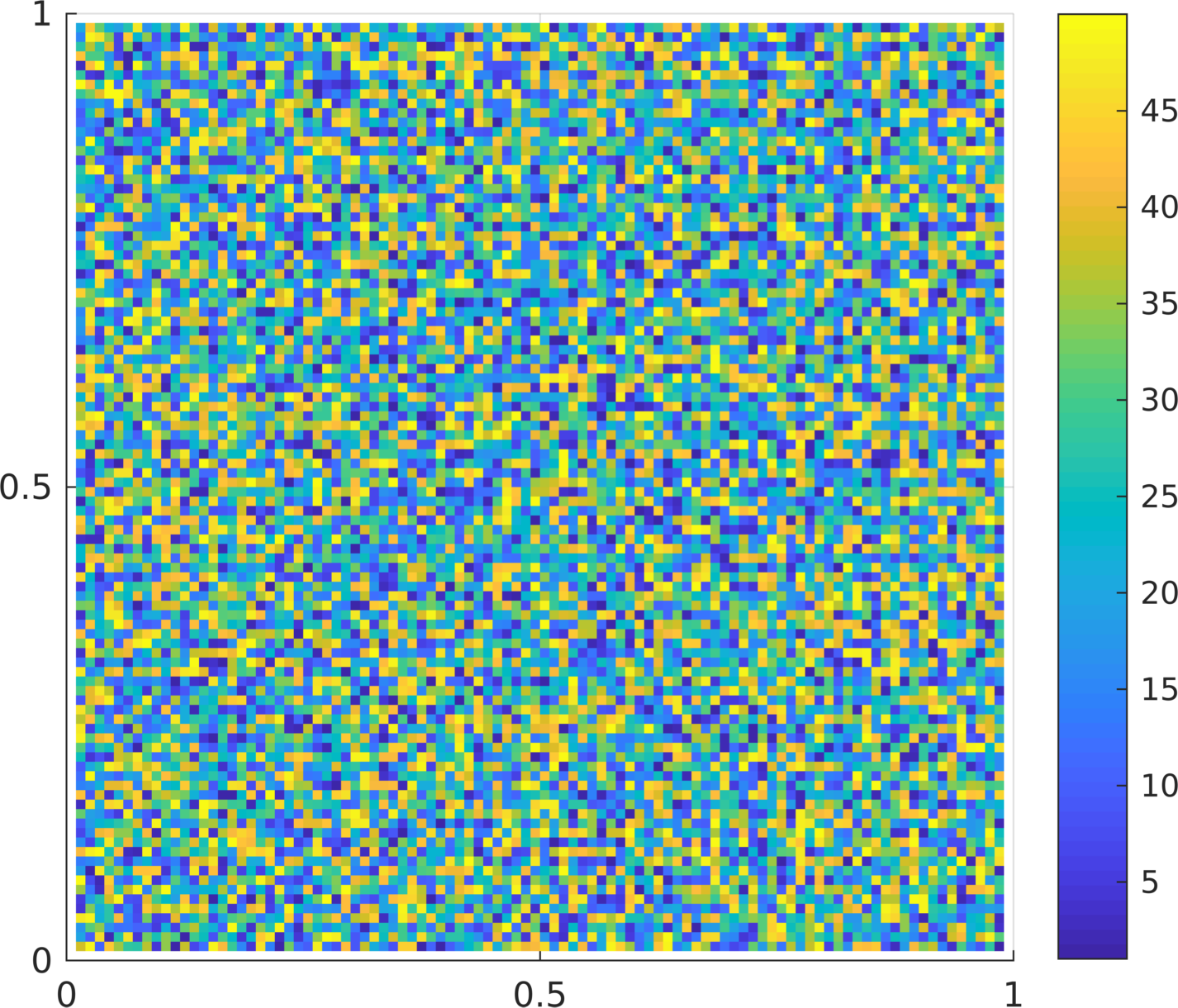}\hspace{0.2cm}
	\includegraphics[width=.52\textwidth]{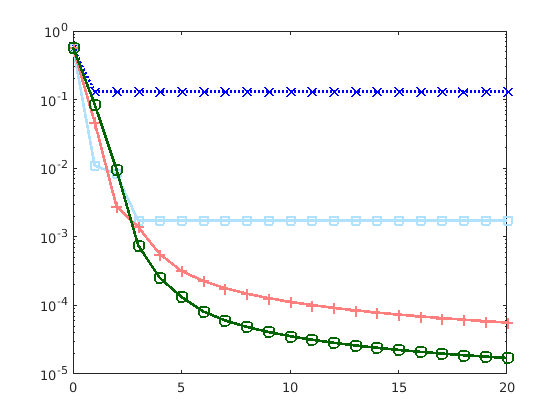}
	\caption[]
	{Left: Diffusion coefficient in Example~1. 
		Right: Values of $\mathcal{J}_H$ in the first 20 iterations of the inversion algorithm, using sparsity patterns based on local matrices ({\protect \tikz{ \draw node[thick, cross,  blue, fill]{};}}, dotted) and quasi-local matrices with $\ell = 1$ ({\protect \tikz{ \draw[line width=1.pt, color1] (0,0) rectangle (0.18,0.18);}}), $\ell = 2$ ({\protect \tikz{ \draw node[thick, cross, rotate=45, color2, fill]{};}}), $\ell = 3$ ({\protect \tikz{ \draw[line width=1.pt, color3] circle (0.6ex);}}).}
	\label{fig:fulldata}    
\end{figure}
The values of the functional $\mathcal{J}_H$ in the first 20 iterations of the inversion algorithm are given in Figure~\ref{fig:fulldata} (right). In particular, we compare the performance of a \emph{local approach} based on matrices in $\MS(0,\tri_H)$ with the sparsity pattern of, e.g., a standard first-order finite element method, the HMM, or the Two-Scale Finite Element Method, with a \emph{quasi-local approach} based on matrices in $\MS(\ell,\tri_H)$ for $\ell \in \{1,2,3\}$.
One clearly sees that slightly deviating from locality leads to better results in terms of decrease and value of the error functional $\mathcal{J}_H$. 
In particular, for $\ell = 0$ the functional seems to reach a stagnation relatively quickly, while the results significantly improve when increasing the value of $\ell$. 

A necessary validation step, in order to further investigate the behavior dependent on $\ell$, consists in solving a diffusion problem using the stiffness matrices reconstructed with using different sparsity patterns, and comparing the resulting numerical solutions with the finite element functions from which the measurements were taken to feed the inversion algorithm. 
The outcome of this assessment is shown in Figure \ref{fig:fulldata-crossX1}, focusing on the cross sections at $x_2 = 0.5$ (left) and at $x_1 = 0.5$ (right) of the numerical approximations corresponding to the boundary condition $u^0(x_1,x_2) = x_1$.
Figure \ref{fig:fulldata-crossX2} depicts the same cross sections when a random boundary condition $u^0\in\XH$ is considered.

\begin{figure}
	\centering
	\includegraphics[width=.48\textwidth]{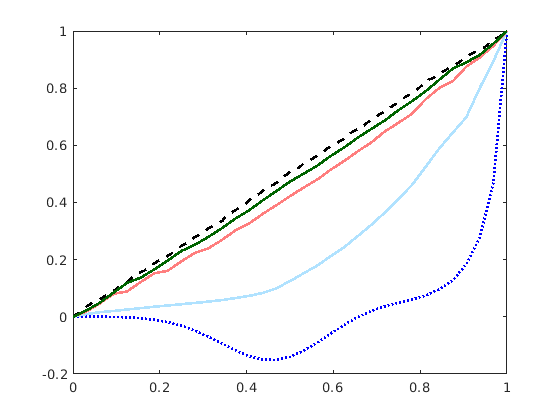}
	\hspace{0.3cm}
	\includegraphics[width=.48\textwidth]{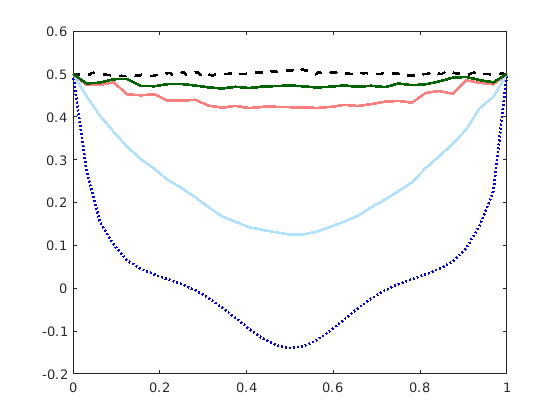}
	\caption[]
	{Cross sections of reconstructed functions with boundary condition $u^0(x_1,x_2) = x_1$ based on local stiffness matrices ({\protect \tikz{ \draw[line width=1.pt, blue, fill] circle (0.6ex);}}, dotted) and quasi-local ones with $\ell = 1$ ({\protect \tikz{ \draw[line width=1.pt, color1, fill] circle (0.6ex);}}), $\ell = 2$ ({\protect \tikz{ \draw[line width=1.pt, color2, fill] circle (0.6ex);}}), $\ell = 3$ ({\protect \tikz{ \draw[line width=1.pt, color3, fill] circle (0.6ex);}}) for Example~1 obtained from full boundary data. 
		The corresponding microscopic FE function 
		({\protect \tikz{ \draw[line width=1.pt, black, fill] circle (0.6ex);}}, dashed) is depicted as a reference. 
		Left: Cross section at $x_2 = 0.5$.
		Right: Cross section at $x_1 = 0.5$.}
	\label{fig:fulldata-crossX1}  
\end{figure}

\begin{figure}
	\centering
	\includegraphics[width=.48\textwidth]{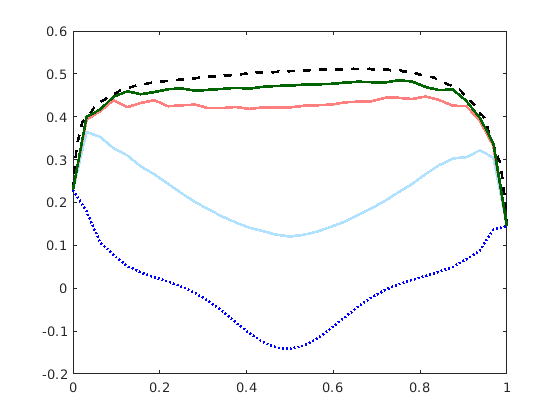}
	\hspace{0.3cm}
	\includegraphics[width=.48\textwidth]{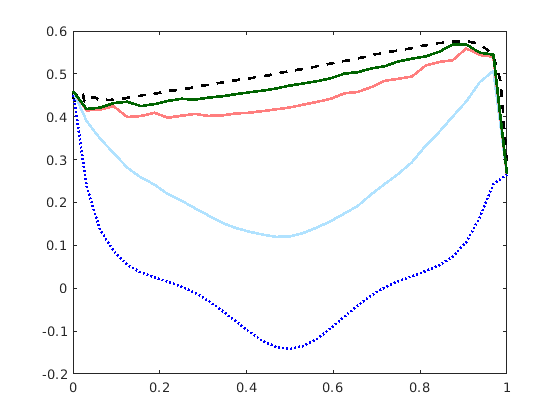}
	\caption[]
	{Cross sections of reconstructed functions with random boundary condition $u^0\in\XH$ based on local stiffness matrices ({\protect \tikz{ \draw[line width=1.pt, blue, fill] circle (0.6ex);}}, dotted) and quasi-local ones with $\ell = 1$ ({\protect \tikz{ \draw[line width=1.pt, color1, fill] circle (0.6ex);}}), $\ell = 2$ ({\protect \tikz{ \draw[line width=1.pt, color2, fill] circle (0.6ex);}}), $\ell = 3$ ({\protect \tikz{ \draw[line width=1.pt, color3, fill] circle (0.6ex);}}) for Example~1 obtained from full boundary data. 
		The corresponding microscopic FE function  
		({\protect \tikz{ \draw[line width=1.pt, black, fill] circle (0.6ex);}}, dashed) is depicted as reference. 
		Left: Cross section at $x_2 = 0.5$.
		Right: Cross section at $x_1 = 0.5$.}
	\label{fig:fulldata-crossX2}  
\end{figure}

Besides the accuracy of the numerical approximations computed based on the recovered stiffness matrices, it is also important to assess the robustness of the reconstructed effective model, i.e., to investigate to which extent the coarsened information about the diffusion coefficient encoded in the stiffness matrix can be used to simulate other scenarios.

For this purpose, we employ the reconstructed stiffness matrices to simulate a diffusion problem with two different right-hand sides, i.e., 
\begin{equation*}
g_1(x_1,x_2) = 20\,(\id_{\{x_1 < 0.5\}}\,x_1 + \id_{\{x_1 \geq 0.5\}}\,(1-x_1))(\id_{\{x_2 < 0.5\}}\,x_2 + \id_{\{x_2 \geq 0.5\}}\,(1-x_2))
\end{equation*}
and
\begin{equation*}
g_2(x_1,x_2) = 10\,\id_{\{x_1 \geq 0.5\}},
\end{equation*}
and compare the numerical results with the corresponding microscopic solution using the diffusion coefficient depicted in Figure \ref{fig:fulldata} (left).
In both cases, homogeneous Dirichlet boundary conditions are imposed on the outer boundaries.

Representative cross sections of the numerical approximations obtained based on the reconstructed stiffness matrices, compared to the corresponding microscopic solutions, are shown in Figure~\ref{fig:crossg}.
The numerical results indicate that robustness can be assured only with some moderate quasi-locality. 
Moreover, as in the previous experiments, the quality of the results improves if $\ell$ is increased. 

\begin{figure*}
	\centering
	\includegraphics[width=0.48\textwidth]{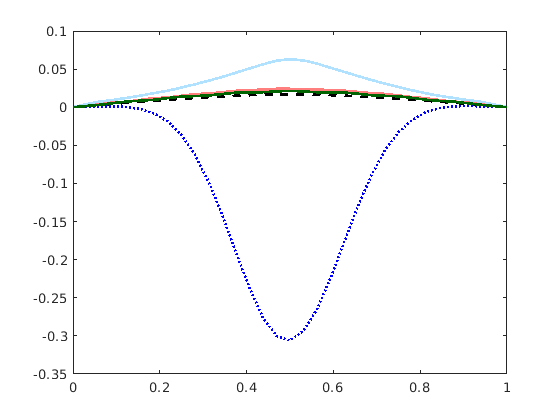}
	\hspace{0.3cm}
	\includegraphics[width=0.48\textwidth]{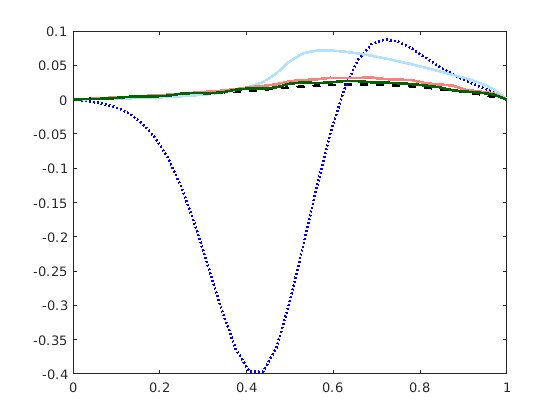}
	\caption[]
	{\small Cross sections at $x_2 = 0.5$ of reconstructed functions with homogeneous Dirichlet boundary conditions based on local stiffness matrices ({\protect \tikz{ \draw[line width=1.pt, blue, fill] circle (0.6ex);}}, dotted) and quasi-local ones with $\ell = 1$ ({\protect \tikz{ \draw[line width=1.pt, color1, fill] circle (0.6ex);}}), $\ell = 2$ ({\protect \tikz{ \draw[line width=1.pt, color2, fill] circle (0.6ex);}}), $\ell = 3$ ({\protect \tikz{ \draw[line width=1.pt, color3, fill] circle (0.6ex);}}). The corresponding microscopic FE functions ({\protect \tikz{ \draw[line width=1.pt, black, fill] circle (0.6ex);}}, dashed) are given as a  reference but were not part of the input data. 
		Left: Right-hand side $g_1$. 
		Right: Right-hand side $g_2$.}
	\label{fig:crossg}
\end{figure*}

\subsection{Example 2: incomplete boundary data}\label{ss:partialdata}

Next, we consider a more realistic case where the operator $\tilde\DtoN^\mathrm{eff}$ is only partially known. 
In practice, this means that coarse measurements in $\Omega$ are available only for $q$ distinct boundary conditions  in $\XH$ ($q < \mathrm{dim}\, \XH$). 
In this setting, the aim is to find an effective model that not only fits the given data, but that is also able to reproduce the coarse behavior for other boundary conditions not considered as input data. 

The scalar coefficient $A$ whose corresponding stiffness matrix should be recovered is shown in Figure~\ref{fig:partialdata} (left). 
We set $H=\sqrt{2}\cdot2^{-5}$, $f=1$, $q = 40$, and the initial matrix $S_H^0$ is defined as the finite element stiffness matrix based on an independent and uniformly distributed random coefficient on the coarse scale $H$ with values between $0.1$ and $10$.

We adapt the \emph{randomized approach} used in \cite{OY18} in the context of deep learning. 
Namely, in each iteration step, we randomly choose half of the available data to compute the new search direction, whereas we use all available data for the line search and for the evaluation of the functional $\mathcal{J}_H$. 
The values of the error functional $\mathcal{J}_H$ in the first 20 iterations of the inversion algorithm are shown in Figure~\ref{fig:partialdata} (right). 
One can observe that classical local stiffness matrices and even the quasi-local approach with $\ell=1$ cannot significantly improve the results obtained with the initial guess, while quasi-local matrices with $\ell \geq 2$ are able to reduce the values of the functional up to a certain degree.

As in the previous subsection, we validate the outcome of the inversion algorithm by solving a diffusion problem using the reconstructed stiffness matrices and comparing the numerical results with the corresponding microscopic finite element solutions. 
The cross sections at $x_2 = 0.5$ and $x_1 = 0.5$ of the numerical approximations using the different stiffness matrices are shown in Figure~\ref{fig:partialdata-cross1}, for the case with boundary condition $u^0(x_1,x_2) = x_1$.
We emphasize that, in this setting, neither the reference finite element function (black dotted line in Figure~\ref{fig:partialdata-cross1}) nor a coarse measurement from it were part of the input data.
\begin{figure}
	\centering
	\includegraphics[width=.45\textwidth]{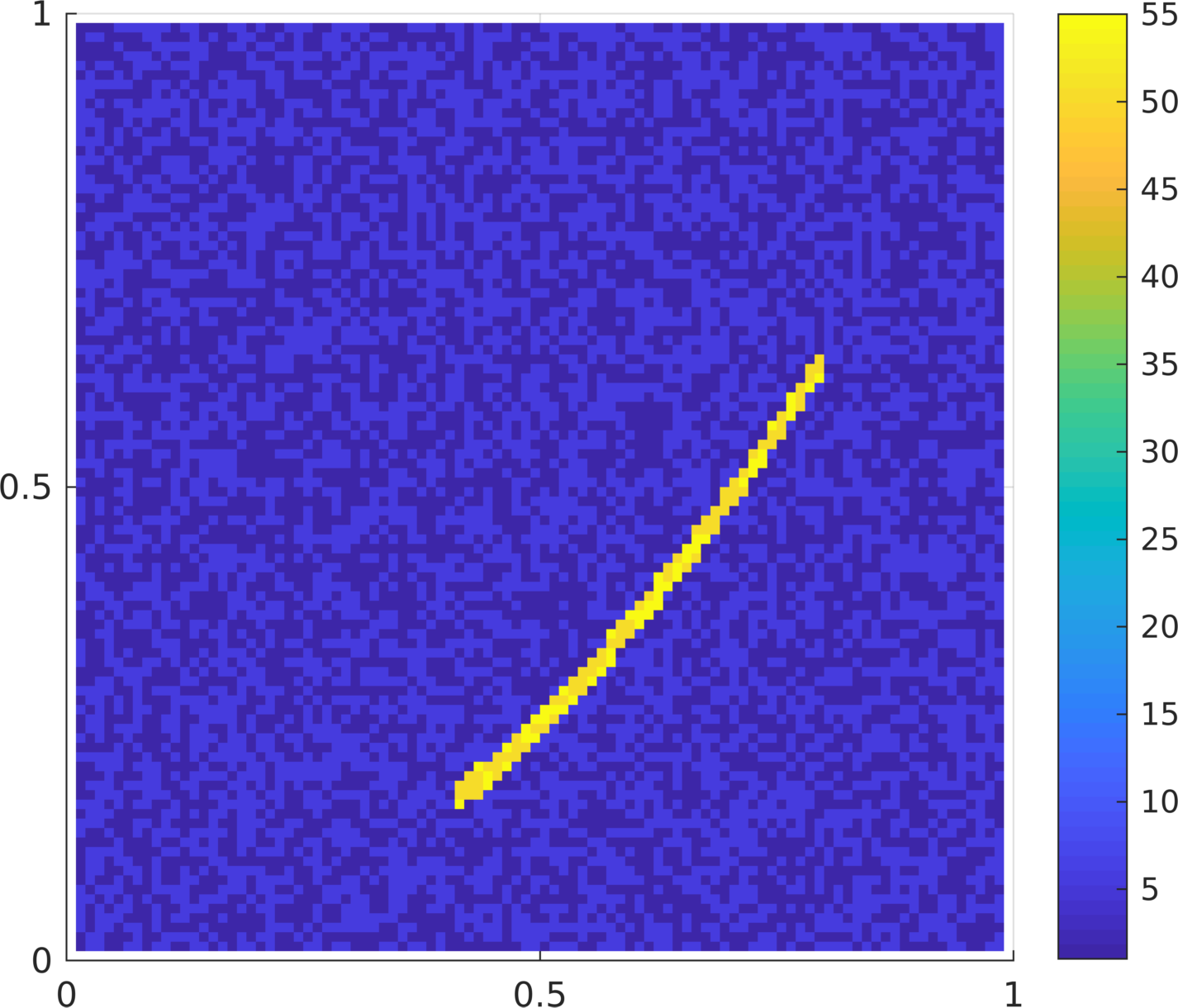}\hspace{0.2cm}
	\includegraphics[width=.52\textwidth]{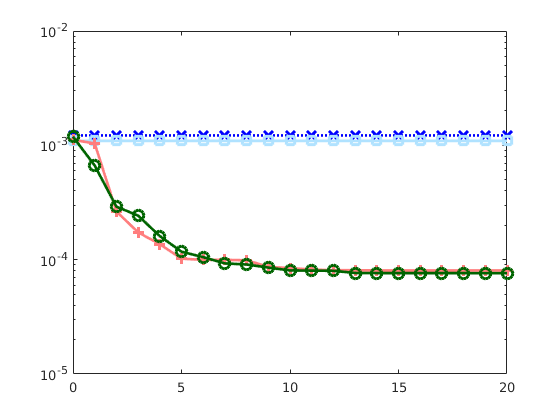}
	\caption[]
	{Left: Diffusion coefficient in Example~2. 
		Right: Values of $\mathcal{J}_H$ in the first 20 iterations of the inversion algorithm based on local matrices ({\protect \tikz{ \draw node[thick, cross,  blue, fill]{};}}, dotted) and quasi-local matrices with $\ell = 1$ ({\protect \tikz{ \draw[line width=1.pt, color1] (0,0) rectangle (0.18,0.18);}}), $\ell = 2$ ({\protect \tikz{ \draw node[thick, cross, rotate=45, color2, fill]{};}}), $\ell = 3$ ({\protect \tikz{ \draw[line width=1.pt, color3] circle (0.6ex);}}).}
	\label{fig:partialdata}    
\end{figure}
\begin{figure}
	\centering
	\includegraphics[width=.48\textwidth]{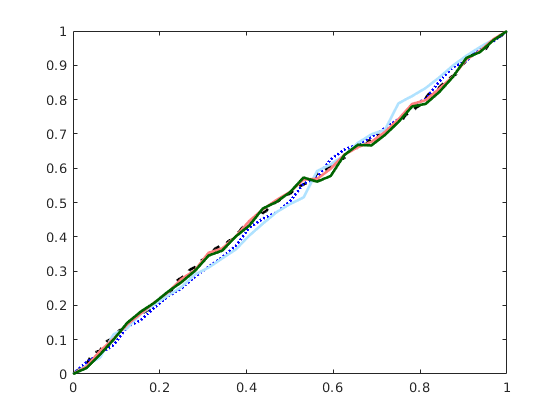}
	\hspace{0.3cm}
	\includegraphics[width=.48\textwidth]{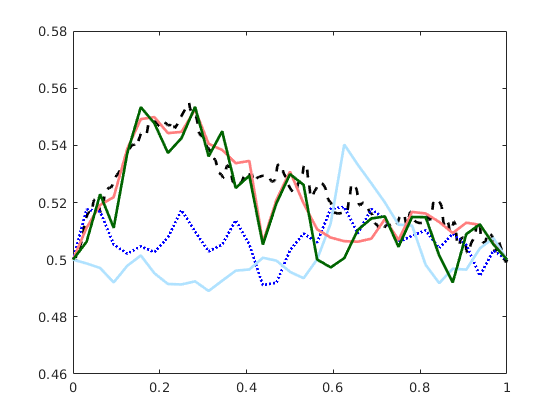}
	\caption[]
	{Cross sections of reconstructed functions with boundary condition $u^0(x_1,x_2) = x_1$ based on local stiffness matrices ({\protect \tikz{ \draw[line width=1.5pt, blue, fill] circle (0.6ex);}}, dotted) and quasi-local ones with $\ell = 1$ ({\protect \tikz{ \draw[line width=1.5pt, color1, fill] circle (0.6ex);}}), $\ell = 2$ ({\protect \tikz{ \draw[line width=1.pt, color2, fill] circle (0.6ex);}}), $\ell = 3$ ({\protect \tikz{ \draw[line width=1.pt, color3, fill] circle (0.6ex);}}) for Example~2 obtained from incomplete boundary data and the \textit{randomized approach}. 
		The corresponding microscopic FE function ({\protect \tikz{ \draw[line width=1.pt, black, fill] circle (0.6ex);}}, dashed) is depicted as a reference but was not part of the input data. 
		Left: Cross section at $x_2 = 0.5$.
		Right: Cross section at $x_1 = 0.5$.}
	\label{fig:partialdata-cross1}
\end{figure}

For a further comparison, we also present in Figure~\ref{fig:partialdata-fulldataapproach} the same cross sections of the numerical solutions obtained from the stiffness matrices using a \emph{full-data approach}, i.e., when all available data ($40$ measurements) are used in every step to compute the new search direction. 
The reconstructed matrices behave similarly to the ones obtained with the randomized approach. 
However, it is worth mentioning that the randomized strategy is generally more robust in the case of incomplete boundary data, and additionally requires less computational effort. 
For further numerical experiments, see also \cite[Ch.~4]{Mai20}.

\subsection{Discussion} 

The presented inversion results demonstrate that the reconstruction of the stiffness matrix assuming a local sparsity pattern with communication only between neighboring degrees of freedom does not allow us to capture effective features of the microscopic problem, while the reconstruction based on a quasi-local approach, especially with $\ell \geq 2$, is able to mimic the effective behavior.

Furthermore, some quasi-locality appears to allow for robustness with respect to different right-hand sides, a property which allows us to employ the reconstructed effective model for the simulation of other scenarios, assuming that the microscopic properties remain unchanged.

Our experiments also indicate that a lower bound on $\ell$ seems to be necessary similar to the forward setting, where $\ell$ needs to be increased for smaller values of $H$ ($\ell \gtrsim |\log H|$) to obtain improvements in the first place. 
In that sense, our findings also deviate from the numerical results in \cite{GGS12} which indicate that truly local numerical homogenization might always be possible.

\begin{figure}
	\centering
	\includegraphics[width=.48\textwidth]{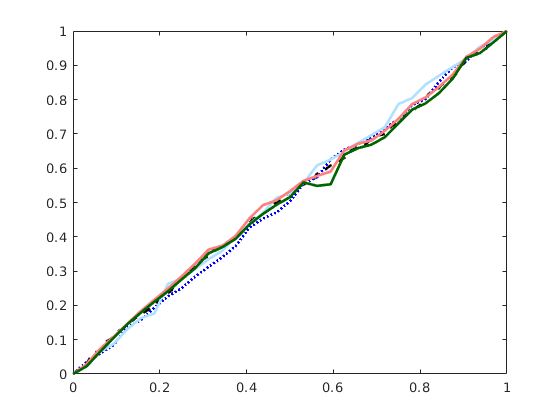}
	\hspace{0.3cm}
	\includegraphics[width=.48\textwidth]{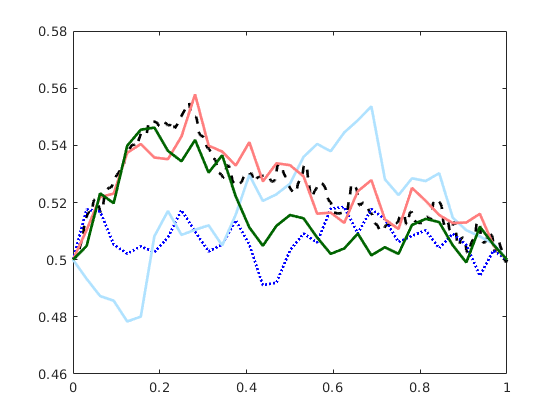}
	\caption[]
	{Cross sections of reconstructed functions with boundary condition $u^0(x_1,x_2) = x_1$ based on local stiffness matrices ({\protect \tikz{ \draw[line width=1.pt, blue, fill] circle (0.6ex);}}, dotted) and quasi-local ones with $\ell = 1$ ({\protect \tikz{ \draw[line width=1.pt, color1, fill] circle (0.6ex);}}), $\ell = 2$ ({\protect \tikz{ \draw[line width=1.pt, color2, fill] circle (0.6ex);}}), $\ell = 3$ ({\protect \tikz{ \draw[line width=1.pt, color3, fill] circle (0.6ex);}}) for Example~2 obtained from incomplete boundary data and the full-data approach. The corresponding microscopic FE function ({\protect \tikz{ \draw[line width=1.pt, black, fill] circle (0.6ex);}}, dashed) is depicted as a reference but was not part of the input data. 
		Left: Cross section at $x_2 = 0.5$.
		Right: Cross section at $x_1 = 0.5$.}
	\label{fig:partialdata-fulldataapproach}
\end{figure}

\section{Conclusion}\label{sec:conclusion}

We proposed a strategy to reconstruct the effective behavior of solutions of a multiscale PDE model which involves a coefficient varying on a microscopic scale. 
The approach is motivated by the effective models (represented by effective stiffness matrices) obtained by numerical homogenization. The aim is to provide a first step towards a reasonable surrogate in the inverse multiscale setting, which is characterized by a mismatch between coarse data scale and microscopic quantities.
The method relies on a quasi-local behavior in the sense that the reconstructed system matrices have a slightly denser sparsity pattern than standard finite element matrices, and this allows us to recover the behavior related to characteristic microscopic features of the solutions without requiring numerical computations on the microscopic scale. 
The method has been numerically validated on a prototypical model problem, considering a stationary linear elliptic diffusion problem with inhomogeneous boundary conditions.
Further, even the case of incomplete boundary data can be handled and ideas from learning-type methods may be adopted. 

A possible future extension of the approach includes the identification of further information about the underlying coefficient, e.g., geometric features, based on the reconstructed effective models.
Further, more involved combinations with learning-type techniques to deal with incomplete data could be studied, as well as adaptivity with respect to the parameter $\ell$.

\section*{Acknowledgments}
R.~Maier and D.~Peterseim gratefully acknowledge support by the German Research Foundation (DFG) in the Priority Program 1748 \emph{Reliable simulation techniques in solid mechanics. Development of non-standard discretization methods, mechanical and mathematical analysis} (PE2143/2-2).

\newcommand{\etalchar}[1]{$^{#1}$}

\end{document}